\documentclass[11pt,reqno]{amsart}

\usepackage[T1]{fontenc}
\usepackage[utf8]{inputenc}
\usepackage{lmodern,amsmath,amssymb,amsthm,booktabs,microtype}
\usepackage{mathtools,array,longtable,enumitem}
\usepackage[hidelinks]{hyperref}

\hypersetup{
  pdftitle={Strict Hausdorff-dimension drop for quartic Salem Bernoulli convolutions},
  pdfsubject={Hausdorff dimension, Bernoulli convolutions, and rigorous explicit bounds},
  pdfkeywords={Bernoulli convolution, Salem number, Hausdorff dimension, Shannon entropy, Fourier transform, rigorous computation},
  pdfauthor={Guozheng Cheng, Xiang Fang, Xueqing Ma, Hongli Zhang}
}

\numberwithin{equation}{section}

\newtheorem{theorem}{Theorem}[section]
\newtheorem{proposition}[theorem]{Proposition}
\newtheorem{lemma}[theorem]{Lemma}
\newtheorem{corollary}[theorem]{Corollary}
\theoremstyle{definition}
\newtheorem{definition}[theorem]{Definition}
\theoremstyle{remark}

\newcommand{\R}{\mathbb R}
\newcommand{\Z}{\mathbb Z}

\newcommand{\E}{\mathbb E}
\newcommand{\Prob}{\mathbb P}
\DeclareMathOperator{\Law}{Law}

\newcommand{\dimH}{\dim_{\mathrm H}}

\makeatletter
\newcommand{\sectionnotoc}[1]{%
  \let\saved@tocwrite\@tocwrite
  \let\@tocwrite\@gobbletwo
  \section*{#1}%
  \let\@tocwrite\saved@tocwrite
}
\makeatother

\title[Quartic Salem Bernoulli convolutions]{Strict Hausdorff-dimension drop for\\
Quartic Salem Bernoulli convolutions}

\subjclass[2020]{Primary 28A80; Secondary 11R06, 60G50}

\keywords{Bernoulli convolution, Salem number, Hausdorff dimension,
Shannon entropy, Fourier transform, rigorous computation}

\date{}

\author{Guozheng Cheng}
\address{School of Mathematical Sciences, Dalian University of Technology, Dalian, 116024, P. R. China}
\email{gzhcheng@dlut.edu.cn}

\author{Xiang Fang}
\address{Department of Applied Mathematics, National Yang Ming Chiao Tung University, Hsinchu, 300093, Taiwan}
\email{xfang@nycu.edu.tw}

\author{Xueqing Ma}
\address{School of Mathematical Sciences, Dalian University of Technology, Dalian, 116024, P. R. China}
\email{maxueqing4680@mail.dlut.edu.cn}

\author{Hongli Zhang}
\address{School of Mathematical Sciences, Dalian University of Technology, Dalian, 116024, P. R. China}
\email{zhanghongli1@mail.dlut.edu.cn}

\begin{document}

\begin{abstract}
For every quartic Salem number $\beta$, we prove that the equal-weight
Bernoulli convolution $\nu_{\beta^{-1}}$ has Hausdorff dimension strictly
less than one. The main step converts large Fourier coefficients at
geometrically separated frequencies into a deficit in the Shannon
entropy of a finite convolution. A positive trigonometric product gives
the entropy estimate, while the algebraic norm controls the number of
distinct atoms in each spatial cell. For the two quartic Salem numbers
in $(1,2)$, reciprocal differences at suitable return times produce the
required Fourier coefficients. A finite block cover then gives the
dimension bound. This proof is independent of numerical computation.
Separate exact calculations give
$\dimH\nu_{\beta_1^{-1}}<1-1.4\cdot10^{-23}$ and
$\dimH\nu_{\beta_2^{-1}}<1-7.8\cdot10^{-22}$, where
$\beta_1<\beta_2$ are the two parameters in $(1,2)$.
The accompanying verification programs use rational interval arithmetic
and the Python standard library.
\end{abstract}

\maketitle

\vspace{-2em}

\tableofcontents

\section{Introduction}
\label{sec:introduction}
This paper is a sequel to~\cite{CFMZ}.
For $\beta>1$, let
\[
 W=\sum_{k=0}^{\infty}\varepsilon_k\beta^{-k},
 \qquad \nu_{\beta^{-1}}=\Law(W),
 \qquad
 \Prob(\varepsilon_k=1)=\Prob(\varepsilon_k=-1)=\frac12,
\]
where the signs are independent. The subscript denotes the contraction
ratio. Bernoulli convolutions form a basic family of self-similar
measures whose regularity depends on the arithmetic of this ratio.
For $\beta>2$, the elementary dimension bound
$\dimH\nu_{\beta^{-1}}\le\log2/\log\beta$ is already strictly below one.
The question considered here lies in the overlapping range $1<\beta<2$.

We study quartic Salem parameters. A Salem number is a real algebraic
integer $\beta>1$ whose other conjugates consist of $\beta^{-1}$ and
one or more pairs on the unit circle. In degree four there is exactly
one such pair, denoted by $\zeta,\bar\zeta$.
The contribution of this pair to an algebraic orbit is an oscillation
that does not decay. Controlling this term while keeping the cost of
the remaining orbit small is the main difficulty in the construction
below.

There are two closely related regularity results to distinguish.
In~\cite{CFMZ} we prove a strict drop in correlation
dimension for the two quartic Salem parameters in $(1,2)$.
 The preprint~\cite[Theorem~5.1]{OpenAI} establishes
singularity for the same parameters by a Fourier-analytic argument.
We emphasize that~\cite{OpenAI} is cited only for comparison:
neither the proof of Theorem~\ref{thm:main} nor the rigorous
computations underlying Theorem~\ref{thm:quantitative} rely on
any result from that preprint. 
Our localization argument starts from the reciprocal-orbit construction in~\cite{CFMZ}. 
Neither singularity nor a strict drop in correlation dimension alone
implies a strict drop in Hausdorff dimension. Our aim is to prove this
stronger dimension conclusion and to give explicit positive bounds for
the two deficits.

The distinction leads us to Shannon entropy. Garsia~\cite{Garsia}
related the entropy of finite Bernoulli convolutions to singularity,
and Akiyama, Feng, Kempton, and Persson~\cite{AFKP} developed matrix
methods for the entropy of algebraic Bernoulli convolutions.
Here we obtain a finite entropy deficit directly from Fourier
coefficients. The argument uses the exact distribution of the finite
sum, with coincident atoms identified, and ends with an explicit
Hausdorff cover.

For a Borel probability measure $\mu$ on $\R$, we use the convention
\[
 \dimH\mu
 =\inf\{\dimH A:A\subset\R\text{ is Borel and }\mu(A)=1\}.
\]
Translating and rescaling to digits $\{0,1\}$, or starting the series at
$k=1$, does not change this dimension.
The two quartic Salem numbers in $(1,2)$ are the larger real roots
$\beta_1<\beta_2$ of
\begin{equation}\label{eq:two-polynomials}
 \begin{split}
 P_1(X)&=X^4-X^3-X^2-X+1,\\
 P_2(X)&=X^4-2X^3+X^2-2X+1.
 \end{split}
\end{equation}
Equivalently,
\[
 u_1=\frac{1+\sqrt{13}}2,\qquad u_2=1+\sqrt2,
 \qquad
 \beta_i=\frac{u_i+\sqrt{u_i^2-4}}2.
\]
We include the elementary classification in
Proposition~\ref{prop:quartic-classification}.

\begin{theorem}\label{thm:main}
For every quartic Salem number $\beta$,
\[
 \dimH\nu_{\beta^{-1}}<1.
\]
\end{theorem}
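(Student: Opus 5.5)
We first reduce to the two parameters $\beta_1,\beta_2$. By Proposition~\ref{prop:quartic-classification}, every quartic Salem number either exceeds $2$ or equals $\beta_1$ or $\beta_2$. If $\beta>2$, the trivial covering bound gives $\dimH\nu_{\beta^{-1}}\le\log2/\log\beta<1$. The remaining work therefore concerns $\beta\in\{\beta_1,\beta_2\}$. For these we follow the outline of the abstract: a Fourier lower bound, then an entropy deficit, then a block cover.

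\emph{Step 1: large Fourier coefficients at separated frequencies.} The Fourier transform factors as $\widehat\nu(\xi)=\prod_{k\ge0}\cos(2\pi\xi\beta^{-k})$. We take frequencies $\xi_n=\beta^{n}$, or the reciprocal orbit $\beta^n+\beta^{-n}$ as in \cite{CFMZ}, along return times $n$. The return times are chosen so that the oscillating conjugate contribution $2\operatorname{Re}\zeta^n$ lies near a fixed value, using the equidistribution of $\arg\zeta$ mod $2\pi$. Since $\operatorname{Tr}(\beta^n)\in\Z$, the distance $\|\beta^{n}\beta^{-k}\|$ is controlled by $|\zeta|^{n-k}=1$ terms plus $\beta^{-(n+k)}$. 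We need the product of cosines to stay bounded below by a constant $c>0$ uniformly in $n$ along this sequence, so we pick a finite window where the cosines are controlled. Passing to a finite convolution $\nu^{(N)}$ of the first $N$ digits, we get $|\widehat{\nu^{(N)}}(\xi_j)|\ge c$ for $m$ geometrically separated frequencies $\xi_1<\dots<\xi_m$ within scale $\beta^N$.

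\emph{Step 2: entropy deficit.} Let $\mu_N$ be the exact law of $\sum_{k<N}\varepsilon_k\beta^{-k}$, with coincident atoms merged. We aim to show $H(\mu_N;\mathcal D_{\beta^{-N}})\le N\log\beta-\delta N$ at partition scale $\beta^{-N}$. We use a positive trigonometric polynomial $T(x)=\prod_j(1+a\cos(2\pi\xi_jx+\phi_j))\ge0$ whose integral against $\mu_N$ exceeds its Lebesgue average by a factor $(1+ac/2)^{m}$ roughly; this relies on geometric separation to make the product's Fourier expansion nearly orthogonal. Gibbs' inequality, $H\le\log\int T\,d\lambda-\int\log T\,d\mu$, then combined with Jensen, yields a deficit proportional to $m$. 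The algebraic norm bound (the conjugates have modulus $\le\beta$) limits the number of distinct atoms in each cell of size $\beta^{-N}$ to a polynomial in $N$. This lets us pass from atom entropy to cell entropy without loss.

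\emph{Step 3: from finite entropy to $\dimH$.} Self-similarity gives $\nu=\mu_N*S_{\beta^{-N}}\nu$. Applying Step 2 to consecutive blocks of length $N$, the block digits are i.i.d. For $\nu$-a.e.\ $x$, the law of large numbers shows that the $\beta^{-kN}$-cell of $x$ has mass at least $\exp(-k(N\log\beta-\delta N/2))$ for most $k$. The mass distribution principle, through its covering form, then gives $\dimH\nu\le1-\delta/(2\log\beta)<1$.

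The main obstacle is Step 1. The unimodular pair $\zeta,\bar\zeta$ does not decay, so the tail $\sum_k$ of cosines involves $\cos(2\pi\cdot2\operatorname{Re}\zeta^{n-k})$ for all $k$. These factors must avoid zero uniformly. We would first try choosing $n$ so that $\zeta^{n}$ lies in a small arc near a value $\theta_0$ for which the finite block product is provably $\ge c$. The block lengths must be bounded so that the constants are independent of $N$.
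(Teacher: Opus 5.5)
Your reduction to $\beta_1,\beta_2$ and your Steps~2 and~3 follow the paper's Lemmas~\ref{lem:entropy} and~\ref{lem:block} in outline. Step~1, which you yourself flag as the main obstacle, fails as stated, and it is where the substance of the proof lies.

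A lower bound $|\widehat\nu(\xi_j)|\ge c>0$, uniform along frequencies tending to infinity, is impossible. Since $\beta$ is not a Pisot number, Salem's theorem gives $\widehat\nu(\xi)\to0$. The same obstruction applies to $\widehat{\nu^{(N)}}$ below the scale $\beta^N$, because $\widehat\nu(\xi)=\widehat{\nu^{(N)}}(\xi)\,\widehat\nu(\beta^{-N}\xi)$ and the last factor is close to $1$ when $\beta^{-N}\xi$ is small.

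Concretely, the coefficient at a frequency proportional to $\beta^n$ has modulus $\prod_{j\le n}|\cos(\pi y_j)|$ with $y_j\equiv c\beta^j\pmod{\Z}$. For $c=1$, or for the orbit $\beta^n+\beta^{-n}$, the neutral terms $2\operatorname{Re}\zeta^j$ with $0\le j\le n$ are of order one and equidistributed. So this product is exponentially small in $n$. Placing $\zeta^n$ in a small arc controls only the last factor, not the $n$ earlier ones.

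A bounded window does not help either. The atom count in Step~2 costs $2\log N+C_\beta$, not nothing, and the gain is at most $M\Psi(a)$ with $M=O(N)$. So at one scale $N$ you need on the order of $N$ separated frequencies with $a^2\gg(\log N)/N$. Two smaller points about Step~2. Geometric separation is used only for the Lebesgue integral $\int_IT\,dx\approx|I|$. On the measure side the logarithm makes the gain additive, and $\E\log(1+a\sigma_j\cos(\xi_jW))\ge\Psi(a)$ comes from the chord inequality $\log(1+at)\ge\frac{1+t}2\log(1+a)+\frac{1-t}2\log(1-a)$; Jensen's inequality points the wrong way here.

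The missing idea is to construct algebraic coefficients $c$ whose \emph{entire} orbit $c\beta^j$ stays close to $\Z$ for $0\le j\le J\approx1/\Delta$, where $\Delta$ is the amplitude of the $\zeta$-conjugate term. At the same time the total action $E=\sum_jV(H_j)$ of the localized orbit must be $o(\log(1/\Delta))$. The paper does this as follows:
\begin{enumerate}
\item It starts from the power seeds $U_1^3$ and $U_2^4$, for which $|U_i(\zeta_i)|<1/2$ (Lemma~\ref{lem:seed}).
\item It repeatedly replaces $K$ by $(1-X^{-L_k})K$. Here $L_k=2\cdot4^kq_k$ comes from a pigeonhole return of $\zeta^2$ with $|1-\zeta^{L_k}|<32\cdot16^k\Delta_{k-1}^{3/2}$.
\item The mirror estimate (Lemma~\ref{lem:mirror}) shows that each step at most doubles the action up to an additive constant. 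Hence $E_k\le C_E2^k$ while $\log(1/\Delta_k)\ge26(5/2)^k$ (Proposition~\ref{prop:hierarchy}).
\item Restoring the oscillation gives $|\widehat\nu(\pi|c_k|\beta^n)|^2\ge e^{-B_k}$, with $B_k=E_k+o(1)$, for every $0\le n\le J_k=\lceil\Delta_k^{-1}\rceil$.
\end{enumerate}
This yields about $J_k/2$ frequencies with $a^2\approx e^{-E_k}=J_k^{-o(1)}$. The gain $\tfrac12M_ka_k^2\ge J_ke^{-B_k}/32$ then beats $2\log J_k+C_\beta$ at one fixed scale $N=J_k$. That single finite deficit $H_N<N\log\beta$ is all Lemma~\ref{lem:block} needs. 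A deficit linear in $N$ obtained from uniform constants is neither available nor required.
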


In particular, each of the two measures corresponding to
\eqref{eq:two-polynomials} gives full mass to a Borel set of dimension
strictly below one and is therefore singular. The proof of
Theorem~\ref{thm:main} is independent of numerical computation. The following estimates
come from separate finite calculations.

\begin{theorem}\label{thm:quantitative}
For the two parameters in \eqref{eq:two-polynomials},
\[
 \dimH\nu_{\beta_1^{-1}}<1-1.4\cdot10^{-23},
 \qquad
 \dimH\nu_{\beta_2^{-1}}<1-7.8\cdot10^{-22}.
\]
\end{theorem}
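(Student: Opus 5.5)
The plan is to make the qualitative mechanism behind Theorem~\ref{thm:main} quantitative for the two specific parameters $\beta_1,\beta_2$. Every constant is reduced to a finite, exactly checkable quantity, and the resulting inequalities are verified in rational interval arithmetic. The argument has three stages. First we produce explicit large Fourier coefficients. Then we convert them into an explicit Shannon entropy deficit for a finite convolution. Finally we build an explicit Hausdorff cover from that deficit.

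\emph{Stage 1: Fourier coefficients.} For each $\beta_i$ we fix a finite set of return times $n$ at which the orbit $\beta^n$ is close to the integers. The obstruction is the non-decaying pair $\zeta^n,\bar\zeta^n$. The reciprocal-difference construction handles it: for suitable $n$ the combination $\beta^n+\beta^{-n}$ lies within a computable distance of an integer, because the unit-circle contribution $\zeta^n+\bar\zeta^n=2\cos(n\theta)$ is close to an even integer at those times. We then choose frequencies $\xi_j=\pi\beta^{n_j}$ with the $n_j$ geometrically separated. For these frequencies the characteristic function $\prod_k\cos(\xi\beta^{-k})$ of a finite block has modulus bounded below by an explicit constant $c_i>0$. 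We bound this lower limit rigorously by evaluating the finitely many leading cosine factors in interval arithmetic. The tail factors are handled through $|\cos x|\ge 1-x^2/2$ and a geometric sum.

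\emph{Stage 2: entropy deficit.} Fix a block length $N$. For the finite sum $W_N=\sum_{k<N}\varepsilon_k\beta^{-k}$, we compare its Shannon entropy at scale $\beta^{-N}$ with the maximal value $N\log 2$. A positive trigonometric polynomial built from the chosen frequencies, namely a product of factors $1+\cos(\xi_j x)$, gives a test function whose integral against the law of $W_N$ exceeds its Lebesgue average by an amount controlled by the $c_i$. This yields an explicit deficit $\delta_N$ in entropy relative to $\log$ of the number of cells. To convert this into an entropy statement for the atomic measure, we use the algebraic norm. Distinct atoms of $W_N$ are elements of $\Z[\beta]$, and the nonzero norm bound $|N(\alpha)|\ge 1$ controls how many distinct atoms can fall into one cell of length $\beta^{-N}$. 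We compute this multiplicity bound $M_N$ exactly. The output is an explicit inequality $H(W_N\text{ at scale }\beta^{-N})\le N\log\beta-\eta_i$ with an explicit $\eta_i>0$.

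\emph{Stage 3: cover and the main obstacle.} Concatenating independent blocks and applying the law of large numbers gives a set of full $\nu$-measure. It is covered at scale $\beta^{-mN}$ by about $\exp\bigl(m(N\log\beta-\eta_i+o(1))\bigr)$ intervals, where the overlap between adjacent blocks costs a bounded factor per block that we absorb. Hence $\dimH\nu_{\beta_i^{-1}}\le 1-\eta_i/(N\log\beta_i)$ after subtracting this per-block error. It then remains to check, in rational interval arithmetic, that this quantity lies below $1-1.4\cdot10^{-23}$ and $1-7.8\cdot10^{-22}$ respectively. I expect Stage 2 to be the main obstacle. The entropy gain from Fourier information is second order, roughly $c_i^2$ times small weights, and the multiplicity loss $\log M_N$ and the cell-boundary losses must be kept below it. This is why the final deficits are so tiny. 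I would first optimize the choice of $N$ and of the return times to balance these two effects, accepting small but rigorous constants.
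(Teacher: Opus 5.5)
Your Stage 1 does not produce the Fourier input the argument needs, and this is where the real difficulty lies.

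\textbf{Why Stage 1 fails.} With $\widehat\nu(t)=\prod_{k\ge0}\cos(t\beta^{-k})$ one has
$|\widehat\nu(\pi\beta^n)|^2=\prod_{j\le n}\cos^2(\pi\beta^j)=\exp(-\sum_{j\le n}V(\beta^j))$.
So a lower bound at $\pi\beta^n$ requires control of $\beta^j\bmod\Z$ for \emph{every} $j\le n$, not only at the return time $n$. But $\beta^j+\beta^{-j}+2\operatorname{Re}\zeta^j\in\Z$ gives $\beta^j\equiv-\beta^{-j}-2\cos(j\phi)\pmod{\Z}$, where $\zeta=e^{i\phi}$. This is bounded away from $\Z$ for a positive proportion of $j$. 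Hence $|\widehat\nu(\pi\beta^n)|$ decays exponentially in $n$, and no useful common constant $c_i$ exists. Evaluating the leading factors in interval arithmetic is also hopeless, since at the scales needed $n$ is of order $10^{25}$.

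Quantitatively, the gain $\tfrac12Ma^2$ must beat $2\log N$ with a tiny $a$. You therefore need $M\gtrsim4a^{-2}\log N$ frequencies below $\beta^N$ sharing one lower bound, that is, bounds along a long run of consecutive exponents. It is the frequencies, not the exponents $n_j$, that must be geometrically separated.

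\textbf{The missing idea.} You should change the multiplier rather than select times. Take $c=-\kappa_\beta\beta^{i-1}K(\beta)$ at the left end of the core of a reciprocal localized orbit (Definition~\ref{def:packet}), where $K$ is a Laurent polynomial with $|K(\zeta)|$ tiny. Then $c\beta^j\bmod\Z$ is a sequence of finite action $E$ plus a neutral oscillation of amplitude $\Delta=2|\kappa_\zeta K(\zeta)|$. By \eqref{eq:fourier-product} and Lemma~\ref{app:sharp-restoration}, $|\widehat\nu(\pi|c|\beta^n)|^2>e^{-A_J}$ for \emph{all} $0\le n\le J$, with $A_J=E+O(\Delta)+O(J\Delta^2)$.

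Return times enter only as delays $L$ in factors $1-X^{-L}$ with $\zeta^L\approx1$. Each such factor multiplies $\Delta$ by $|1-\zeta^L|$ while roughly doubling $E$ (Lemmas~\ref{lem:mirror} and~\ref{app:direct-difference}). Concretely, $U_1^3$ with $L=124,\,489637838$, and the preprocessed $U_2^4$ seed with $L=1846082221$, give $E\approx50$ and $48$ and $\Delta\approx10^{-13}$ and $10^{-15}$. This yields $a$ of order $10^{-11}$ on $J=2\cdot10^{25}$ and $10^{26}$ exponents. Only the $27$ or $2885$ seed coordinates are evaluated numerically; everything else is propagated analytically.

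\textbf{Stage 2.} This also fails as written. With coefficient one, $\prod_j(1+\cos(\xi_jx))$ has a logarithm unbounded below. Moreover, lower-bounding its integral against the law would involve the Fourier transform at all combination frequencies $\sum\pm\xi_j$, which you do not control.

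What works is $\mathcal P=\prod_j(1+a\sigma_j\cos(\xi_jx))$, with $a$ the common Fourier lower bound and $\sigma_j=\operatorname{sign}\widehat\nu(\xi_j)$. Three facts then give the grid deficit of Lemma~\ref{lem:entropy}:
\begin{itemize}
\item concavity gives $\E\log\mathcal P(W)\ge M\Psi(a)\ge\tfrac12Ma^2$ factor by factor;
\item the Lipschitz bound for $\log\mathcal P$ compares the cell weights $q_k\propto\int_{C_k}\mathcal P$ with $\mathcal P(W)$;
\item Gibbs' inequality $H(K)\le\E[-\log q_K]$ closes the estimate.
\end{itemize}

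Your norm-separation step and Stage 3 are essentially the paper's. In Stage 3 no per-block overlap cost arises if you work with the merged atomic entropy $H(X_N)$, as Lemma~\ref{lem:block} does.
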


The displayed numbers are lower bounds for the deficits
$1-\dimH\nu_{\beta_i^{-1}}$. The finite inequalities underlying Theorem~\ref{thm:quantitative}
are proved in the appendices and checked by the accompanying programs.

\subsection{The entropy estimate and the proof}

Write
\[
 X_N=\sum_{k=0}^{N-1}\varepsilon_k\beta^{-k},
 \qquad R=\frac{\beta}{\beta-1}.
\]
Let $A_N$ be the set of distinct values of $X_N$, and put
$p_x=\Prob(X_N=x)$ for $x\in A_N$. Its Shannon entropy is
\[
 H_N=H(X_N)=-\sum_{x\in A_N}p_x\log p_x.
\]
All logarithms are natural. We first prove that $M$ Fourier
coefficients of modulus at least $a$, at suitable geometrically
separated frequencies below the scale $\beta^N$, imply
\[
 H_N\le N\log\beta-\frac12Ma^2+2\log N+C_\beta.
\]
The gain comes from a positive trigonometric product. Its integral over
a bounded interval stays bounded, whereas the expected value of its
logarithm is large under the Bernoulli convolution. Relative entropy
then gives an estimate on spatial cells. The algebraic norm bounds the
number of distinct finite-convolution atoms in a cell by a constant
multiple of $N^2$, accounting for the term $2\log N$.

We next construct the Fourier coefficients needed to make the gain
larger than this logarithmic cost. The Fourier transform of
$\nu_{\beta^{-1}}$ is a product of cosines. At the frequencies used
below, the negative logarithm of its squared modulus is a sum of
the potential
$
 V(t)=-\log\cos^2(\pi t)
$
along a canonical algebraic orbit.

The unit-circle conjugates contribute a nondecaying oscillation to the
right tail of such an orbit. Removing that term produces a localized
sequence $H$ with two geometrically decaying real tails. We define
its action by
\[
 E=\sum_{j\in\Z}V(H_j).
\]
We start from fixed power seeds whose values at the unit-circle
conjugates are small. Reciprocal differences at suitable return times
then produce localized orbits with actions $E_k$, outgoing amplitudes
$\Delta_k$, and expanding coefficients $c_k$ satisfying
\[
 \Delta_k\longrightarrow0,\qquad
 E_k=o\!\left(\log\frac1{\Delta_k}\right),\qquad
 0<c_*\le|c_k|\le h<\frac1{100}.
\]
The constants $c_*$ and $h$ are independent of $k$. The initial
power seeds are specified in Lemma~\ref{lem:seed}.

Restoring the outgoing oscillation gives large Fourier coefficients
over a long interval of orbit indices. The preceding entropy estimate then yields
$H_N<N\log\beta$ at one fixed finite scale. Finally, an independent
block cover gives
\[
 \dimH\nu_{\beta^{-1}}\le\frac{H_N}{N\log\beta}<1.
\]
Thus the strict dimension bound follows from a finite entropy deficit;
the proof does not require an exact-dimensionality theorem or the
calculation of an infinite entropy rate.

The explicit dimension bounds are obtained from finite configurations
built from the same power seeds. Bounds on their action and outgoing
amplitudes yield the required finite inequalities, which are checked
using exact rational interval arithmetic. The large entropy block
lengths are handled symbolically: the verification uses the proved
estimates rather than enumerating the digit words.

Section~\ref{sec:entropy} establishes the entropy and covering estimates.
Sections~\ref{sec:algebra}--\ref{sec:hierarchy} construct the localized
orbits and control their action. Section~\ref{sec:fourier-closure}
uses these orbits to prove Theorem~\ref{thm:main}.
Appendices~\ref{app:explicit} and~\ref{app:scalar} prove the explicit
bounds, and Appendix~\ref{app:arithmetic} describes their exact
verification.

For the remainder of the proof, $\beta$ is one of $\beta_1,\beta_2$
unless stated otherwise, $\nu=\nu_{\beta^{-1}}$, and
$\widehat\nu(t)=\E e^{itW}$.

\section{Finite Fourier biases and Hausdorff covers}\label{sec:entropy}

We first turn a finite family of Fourier lower bounds into an upper
bound for $H_N$. We estimate entropy on a spatial grid, then use the
algebraic norm to pass from cells to the distinct atoms of $X_N$.

Throughout this section, $\beta>1$ is a quartic Salem number, with
conjugates $\beta,r=\beta^{-1},\zeta,\bar\zeta$. We write
$\nu=\nu_{\beta^{-1}}$ and $R=\beta/(\beta-1)$. The variables $W$, $X_N$
and the entropy $H_N$ are as in the introduction; equal atoms are merged
before entropy is computed.

\begin{lemma}\label{lem:entropy}
Let $Q>2$, $0<a\le1/2$, and let $M,N$ be positive integers. Suppose that
positive real frequencies satisfy
\[
 \xi_1\ge\pi,\qquad \xi_{j+1}\ge Q\xi_j,\qquad
 \beta^{-N}\xi_M\le1,\qquad
 |\widehat\nu(\xi_j)|\ge a\quad(1\le j\le M).
\]
Set
\[
 \Psi(a)=\frac{(1+a)\log(1+a)+(1-a)\log(1-a)}2,
 \qquad c_Q=\frac{Q-2}{Q-1},
\]
\[
\begin{aligned}
 D_{Q,a}
 &=\frac{2a}{\pi c_Q(1-(1+a)/Q)},\\
 B_{Q,a}
 &=\frac{aQ}{(1-a)(Q-1)},\\
 C_{\mathrm{sep}}
 &=1+\frac{1+2R}{2(\beta-1)},
\end{aligned}
\]
and
\[
 C_{\beta,Q}(a)=
 \log\!\bigl[(2R+2+D_{Q,a})C_{\mathrm{sep}}\bigr]+B_{Q,a}.
\]
Then
\begin{align}
 H_N
 &\le N\log\beta-M\Psi(a)+2\log N+C_{\beta,Q}(a)\notag\\
 &\le N\log\beta-\frac12Ma^2+2\log N+C_{\beta,Q}(a).
 \label{eq:entropy-parameter}
\end{align}
In particular, if $\beta^2>2$ and $Q=\beta^2$, there is a finite constant
$C_\beta>0$, independent of $a,M,N$, such that
\begin{equation}\label{eq:entropy}
 H_N\le N\log\beta-\frac12Ma^2+2\log N+C_\beta.
\end{equation}
\end{lemma}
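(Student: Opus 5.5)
The plan is to bound the entropy of the grid-cell distribution of $X_N$ by a Gibbs (relative-entropy) inequality with a positive trigonometric test density. Then we pass from cells to atoms using the algebraic norm. Write $\widehat\nu(\xi_j)=|\widehat\nu(\xi_j)|e^{i\theta_j}$ and set
\[
 F(x)=\prod_{j=1}^M\bigl(1+a\cos(\xi_jx-\theta_j)\bigr)>0 .
\]

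\textbf{Step 1 (integral of $F$).} Expand $F$ into characters. The constant term is $1$. Every other term has frequency $\sum_{j\in S}\pm\xi_j$ with at least one nonzero sign. By $\xi_{j+1}\ge Q\xi_j$, its modulus is at least $c_Q\xi_{\max S}$, where $c_Q=(Q-2)/(Q-1)$. Integrate over the interval $I=[-R-1,R+1]$, which contains the support of $W$ and all atoms together with a cell margin. Each nonconstant character contributes at most $2/(\text{frequency})$. Summing the geometric series in the top index, and bounding the sum over lower subsets by products of $(1+a)$, should give
\[
 \int_I F\le 2R+2+D_{Q,a}.
\]
Organizing this combinatorics so that exactly $D_{Q,a}$ appears, with the factor $1-(1+a)/Q$, is the step I expect to be the main obstacle. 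I would first sum over the largest index $m$ with the remaining product bounded by $(1+a)^{m-1}$, and compare this with $\xi_m\ge Q^{m-1}\pi$.

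\textbf{Step 2 (expected logarithm).} The chord inequality for the concave function $\log(1+ac)$ on $c\in[-1,1]$ is
\[
 \log(1+ac)\ge\tfrac{1+c}2\log(1+a)+\tfrac{1-c}2\log(1-a).
\]
Apply it with $c=\cos(\xi_jW-\theta_j)$, using $\E\cos(\xi_jW-\theta_j)=|\widehat\nu(\xi_j)|\ge a$, and sum over $j$ by linearity. This gives
\[
 \E\log F(W)\ge M\Psi(a).
\]
To transfer this to $X_N$, write $W=X_N+\beta^{-N}W'$ with $|W'|\le R$. Then use
\[
 |(\log F)'|\le\frac{a}{1-a}\sum_j\xi_j\le\frac{a}{1-a}\,\frac{Q}{Q-1}\,\xi_M ,
\]
together with $\beta^{-N}\xi_M\le1$. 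The same Lipschitz bound, with the grid mesh $\beta^{-N}$, also controls replacing $F$ by its value on a cell. These oscillation errors together should cost at most $B_{Q,a}$; the exact bookkeeping of the factor $R$ here is to be checked.

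\textbf{Step 3 (Gibbs inequality on cells).} Partition $\R$ into cells of length $\beta^{-N}$. Let $q$ be the cell distribution of $X_N$, and let $g$ be the cell masses of $F/\!\int_I F$. Nonnegativity of relative entropy gives
\[
 H(q)\le-\sum q\log g\le N\log\beta+\log\!\int_I F-\E\log F(X_N)+B_{Q,a},
\]
where Step 2 handles the oscillation of $F$ within a cell.

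\textbf{Step 4 (atoms per cell).} Let $x\ne y$ be atoms. Then $\delta=\beta^{N-1}(x-y)$ is a nonzero algebraic integer in $\Z[\beta]$ with coefficients in $\{0,\pm2\}$. Its conjugate under $\beta\mapsto r$ is bounded by about $2R$, and its conjugates under $\zeta,\bar\zeta$ are each bounded by $2N$. Since the norm satisfies $|N(\delta)|\ge1$, we get $|x-y|\gtrsim\beta^{-N}/N^2$. Hence each cell holds at most $C_{\mathrm{sep}}N^2$ atoms, after careful constants. Since $H_N\le H(q)+\log\max_{\text{cell}}\#\text{atoms}$, this adds $2\log N+\log C_{\mathrm{sep}}$.

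\textbf{Step 5 (conclusion).} Combining Steps 1--4 gives the first inequality in \eqref{eq:entropy-parameter}. For the second, the power series $\Psi(a)=\sum_{k\ge1}a^{2k}/(2k(2k-1))\ge a^2/2$ gives $\Psi(a)\ge a^2/2$. For \eqref{eq:entropy}, take $Q=\beta^2>2$. Then $c_Q>0$ and $1-(1+a)/Q\ge1-3/(2\beta^2)>0$ for $a\le1/2$ when $\beta^2>3/2$. So $D_{Q,a}$ and $B_{Q,a}$ are bounded uniformly for $a\le1/2$, which yields a constant $C_\beta$.
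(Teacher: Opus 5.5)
Your architecture is the paper's: signed cosine product, chord inequality, lacunary expansion of $\int_I F$, the Lipschitz bound $B_{Q,a}\xi_M$, Gibbs on a $\beta^{-N}$-grid, norm separation, and the chain rule. Step~1 works exactly as you sketch it. For largest index $m$ the coefficients have total modulus $a(1+a)^{m-1}$ and frequencies at least $c_Q\xi_m\ge c_QQ^{m-1}\pi$, and summing gives $D_{Q,a}$.

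The gap is the bookkeeping you left open in Steps~2--3, and it does not close as you claim. You take $q$ to be the cell distribution of $X_N$, so you need $\E\log F(X_N)$. The transfer from $W$ costs up to $\operatorname{Lip}(\log F)\,|W-X_N|\le B_{Q,a}\xi_M\cdot R\beta^{-N}\le RB_{Q,a}$, on top of the in-cell cost $B_{Q,a}$. Your estimates therefore produce $(1+R)B_{Q,a}$ where the statement has $B_{Q,a}$. Your $X_N$-cells do hold fewer atoms: at most $1+N^2/(2(\beta-1))$, given the sharp separation. But relative to $\log(C_{\mathrm{sep}}N^2)$ this saves at most $\log(2\beta-1+2R)$, which does not cover $RB_{Q,a}$ when $a$ is near $1/2$. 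For example, with $\beta=\beta_1$, $Q=\beta^2$, $a=1/2$, one has $RB_{Q,a}\approx3.6$ against a saving below $2$. So your chain gives \eqref{eq:entropy} with some constant, but not \eqref{eq:entropy-parameter} with the stated $C_{\beta,Q}(a)$.

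The missing idea is to discretize $W$ rather than $X_N$. The paper sets $K=\lfloor W/\eta\rfloor$ with $\eta=\beta^{-N}$. The Gibbs step then uses $F(W)$ itself, so $\E F(W)\ge M\Psi(a)$ enters directly and only the in-cell oscillation $B_{Q,a}$ appears. The offset $\eta W'$ is paid in the atom count instead. On $\{K=k\}$ one has $X_N\in\eta[k-R,k+1+R]$, an interval of length $(1+2R)\eta$; this is exactly where the factor $1+2R$ in $C_{\mathrm{sep}}$ comes from. Then $H(X_N)\le H(X_N,K)=H(K)+H(X_N\mid K)$.

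Alternatively, you can keep your $X_N$-cells and apply the chord inequality to $X_N$ directly. By independence, $\widehat\nu(\xi)=\E e^{i\xi X_N}\,\widehat\nu(\beta^{-N}\xi)$, so $|\E e^{i\xi_jX_N}|\ge a$ and no transfer is needed. This yields a constant no larger than $C_{\beta,Q}(a)$.

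Finally, in Step~4 you use only that the norm of $\delta$ has modulus at least $1$, which loses a factor $16$. Since $\delta\in2\Z[\beta]$, its norm is divisible by $2^4$. This is what gives the separation $2(\beta-1)\beta^{-N}N^{-2}$ behind $C_{\mathrm{sep}}$.
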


\begin{proof}
Since $\nu$ is symmetric, its Fourier transform is real and
$\widehat\nu(\xi_j)=\mathbb E\cos(\xi_jW)$. We may therefore choose
$\sigma_j\in\{-1,1\}$ so that
\[
 \mathbb E[\sigma_j\cos(\xi_jW)]\ge a
 \qquad(1\le j\le M).
\]
Consider the positive trigonometric product
\[
 \mathcal P(x)=\prod_{j=1}^M(1+a\sigma_j\cos(\xi_jx)),
 \qquad F(x)=\log\mathcal P(x).
\]
Every factor lies between $1-a$ and $1+a$, so $F$ is well defined.
By concavity of the logarithm, for $-1\le t\le1$,
\[
 \log(1+at)\ge
 \frac{1+t}{2}\log(1+a)+\frac{1-t}{2}\log(1-a).
\]
The right-hand side is increasing in $t$. Taking
$t=\sigma_j\cos(\xi_jW)$ and using the choice of $\sigma_j$, we obtain
\[
 \mathbb E\log(1+a\sigma_j\cos(\xi_jW))
 \ge
 \frac{1+a}{2}\log(1+a)
 +\frac{1-a}{2}\log(1-a)
 =\Psi(a).
\]
Summing over $j$ gives $\mathbb EF(W)\ge M\Psi(a)$.
Moreover, $\Psi(0)=\Psi'(0)=0$ and
$\Psi''(a)=1/(1-a^2)\ge1$. Consequently,
\begin{equation*}
 \mathbb EF(W)\ge M\Psi(a)\ge\frac12Ma^2.
\end{equation*}

We next estimate the integral of $\mathcal P$ over a bounded interval.
Write each factor as
\[
 1+a\sigma_j\cos(\xi_jx)
 =1+\frac{a\sigma_j}{2}e^{i\xi_jx}
   +\frac{a\sigma_j}{2}e^{-i\xi_jx}.
\]
In the expansion of the product, consider a nonconstant term and let
$j$ be the largest index at which a nonconstant summand was chosen.
Its frequency $\omega$ satisfies
\[
 |\omega|\ge\xi_j-\sum_{i<j}\xi_i
 \ge\xi_j-\frac{\xi_j}{Q-1}
 =c_Q\xi_j>0,
\]
where we used $\xi_{i+1}\ge Q\xi_i$.
Thus no nonconstant term in the expansion has zero frequency, and
the constant term is one.

For a fixed largest index $j$, the sum of the absolute values of the
corresponding coefficients is $a(1+a)^{j-1}$. Indeed, the two
nonconstant summands in the $j$-th factor have total absolute coefficient
$a$, while each preceding factor contributes a total of $1+a$.
For any bounded interval $I$ and any $\omega\ne0$,
\[
 \left|\int_I e^{i\omega x}\,dx\right|\le\frac2{|\omega|}.
\]
Applying this estimate term by term, and using
$\xi_j\ge Q^{j-1}\xi_1$ and $\xi_1\ge\pi$, gives
\begin{align}
 \left|\int_I\mathcal P(x)\,dx-|I|\right|
 &\le\sum_{j=1}^M
       \frac{2a(1+a)^{j-1}}{c_Q\xi_j}\notag\\
 &\le\frac{2a}{c_Q\xi_1}
       \sum_{j=1}^M\left(\frac{1+a}{Q}\right)^{j-1}
 \le D_{Q,a}.
 \label{eq:product-integral}
\end{align}
Here the geometric series converges since $(1+a)/Q<1$.

We also need to control the variation of $F$ on short intervals.
Differentiating $F$ and using
$1+a\sigma_j\cos(\xi_jx)\ge1-a$, we find
\[
 |F'(x)|
 \le\frac{a}{1-a}\sum_{j=1}^M\xi_j.
\]
Since $\xi_j\le Q^{-(M-j)}\xi_M$, another geometric sum yields
\begin{equation}\label{eq:log-lipschitz}
 \operatorname{Lip}(F)
 \le\frac{a}{1-a}\sum_{j=1}^M\xi_j
 \le B_{Q,a}\xi_M.
\end{equation}

We now use these estimates to bound the entropy on a spatial grid.
Let $\eta=\beta^{-N}$, put
\(
 K=\lfloor W/\eta\rfloor,\
 C_k=[k\eta,(k+1)\eta),
\)
and let $U$ be the union of the cells for which
$\mathbb P(K=k)>0$.
Since $|W|\le R$ and $\eta\le1$, all these cells lie in
$[-R-1,R+1]$. As $\mathcal P$ is positive, applying
\eqref{eq:product-integral} to this containing interval gives
\[
 0<Z_U:=\int_U\mathcal P(x)\,dx
 \le\int_{-R-1}^{R+1}\mathcal P(x)\,dx
 \le2R+2+D_{Q,a}.
\]
For each occupied cell define
$ q_k=\frac1{Z_U}\int_{C_k}\mathcal P(x)\,dx.
$ 
These numbers are positive and sum to one.

Fix an occupied cell $C_k$ and let $y\in C_k$.
For every $x\in C_k$, we have $|x-y|\le\eta$.
The Lipschitz estimate \eqref{eq:log-lipschitz}, together with
$\eta\xi_M\le1$, therefore implies
\[
 F(x)\ge F(y)-B_{Q,a}\xi_M\eta
 \ge F(y)-B_{Q,a}.
\]
Integrating over $C_k$ gives
\[
 q_k\ge\frac{\eta}{Z_U}
       \exp\!\bigl(F(y)-B_{Q,a}\bigr).
\]
In particular, since $W\in C_K$ almost surely,
\[
 -\log q_K
 \le N\log\beta+\log Z_U-F(W)+B_{Q,a}.
\]
The nonnegativity of relative entropy between the distribution of
$K$ and the probability vector $(q_k)$ gives
$H(K)\le\mathbb E[-\log q_K]$. Taking expectations in the preceding
inequality and using the bounds for $\mathbb EF(W)$ and $Z_U$, we obtain
\begin{align}
 H(K)
 &\le N\log\beta-\mathbb EF(W)+\log Z_U+B_{Q,a}\notag\\
 &\le N\log\beta-M\Psi(a)
       +\log(2R+2+D_{Q,a})+B_{Q,a}.
 \label{eq:grid-entropy}
\end{align}

It remains to pass from the grid entropy to $H_N=H(X_N)$.
For this we use the separation of distinct atoms of $X_N$.
Take two distinct values $x,x'$ of $X_N$. Their difference can be
written as
\[
 x-x'=2u,\qquad
 u=\sum_{j=0}^{N-1}d_j\beta^{-j}\ne0,
 \qquad d_j\in\{-1,0,1\}.
\]
Since $\beta$ is an algebraic unit, $\beta^{-1}$ is an algebraic
integer, and hence so is $u$.
At the other three embeddings, corresponding to
$\beta^{-1},\zeta,\bar\zeta$, we have
\[
 \left|\sum_{j=0}^{N-1}d_j\beta^j\right|
 <\frac{\beta^N}{\beta-1},\qquad
 \left|\sum_{j=0}^{N-1}d_j\zeta^{-j}\right|\le N,
 \qquad
 \left|\sum_{j=0}^{N-1}d_j\bar\zeta^{-j}\right|\le N.
\]
The product of these three conjugate values with $u$ is its
algebraic norm, which is a nonzero integer. It follows that
$ 
 1\le
 |u|\frac{\beta^N}{\beta-1}N^2.
$ 
Since $|x-x'|=2|u|$, we conclude that
\begin{equation}\label{eq:norm-separation}
 |x-x'|\ge2(\beta-1)\beta^{-N}N^{-2}.
\end{equation}

Recall the natural decomposition
$ 
 W=X_N+\eta W',
$ 
where $W'$ has the same distribution as $W$ and satisfies
$|W'|\le R$.
On the event $K=k$, we have
$W\in[k\eta,(k+1)\eta)$, and therefore
$ 
 X_N\in\eta[k-R,k+1+R].
$ 
This interval has length $(1+2R)\eta$, while
\eqref{eq:norm-separation} shows that any two distinct atoms of
$X_N$ are separated by at least
$2(\beta-1)\eta N^{-2}$.
Consequently, the interval contains at most
$ 
 1+\frac{1+2R}{2(\beta-1)}N^2
 \le C_{\mathrm{sep}}N^2
$ 
distinct atoms of $X_N$.
Thus, conditional on any value of $K$, the variable $X_N$ has at
most $C_{\mathrm{sep}}N^2$ possible values. Its conditional entropy
is therefore bounded by
\[
 H(X_N\mid K)
 \le\log(C_{\mathrm{sep}}N^2)
 =2\log N+\log C_{\mathrm{sep}}.
\]
By the chain rule,
\[
 H_N=H(X_N)
 \le H(X_N,K)
 =H(K)+H(X_N\mid K).
\]
Combining this with \eqref{eq:grid-entropy} gives
\[
 H_N\le N\log\beta-M\Psi(a)+2\log N
 +\log\!\bigl[(2R+2+D_{Q,a})C_{\mathrm{sep}}\bigr]
 +B_{Q,a},
\]
which is the first inequality in \eqref{eq:entropy-parameter}.
The second follows from $\Psi(a)\ge a^2/2$.

Finally, suppose that $Q=\beta^2>2$.
Since $0<a\le1/2$, the constants above satisfy
\[
 D_{Q,a}\le\overline D_\beta
 :=\frac{1}{\pi c_Q(1-3/(2Q))},\qquad
 B_{Q,a}\le\overline B_\beta
 :=\frac{Q}{Q-1}.
\]
Both upper bounds depend only on $\beta$. Hence
\eqref{eq:entropy} holds with the fixed positive constant
$ 
 C_\beta=
 \log\!\bigl[(2R+2+\overline D_\beta)C_{\mathrm{sep}}\bigr]
 +\overline B_\beta.
$ 
This completes the proof.
\end{proof}

\begin{corollary}\label{cor:entropy-sharp}
Under the hypotheses of Lemma~\ref{lem:entropy}, suppose in addition
that $\beta>5/3$, $Q\ge25/9$, and $a\le1/9$. Then
$C_{\beta,Q}(a)<4$, and in particular
\begin{equation*}
 H_N\le N\log\beta-\frac12Ma^2+2\log N+4.
\end{equation*}
\end{corollary}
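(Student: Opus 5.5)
The plan is to bound each ingredient of $C_{\beta,Q}(a)$ separately by a quantity depending only on the thresholds $5/3$, $25/9$, $1/9$, using monotonicity in $\beta$, $Q$ and $a$. Then I will check numerically that the resulting bound is below $4$. The entropy inequality then follows at once from the second line of \eqref{eq:entropy-parameter} in Lemma~\ref{lem:entropy}.

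First, the $\beta$-dependent constants. The function $R=\beta/(\beta-1)$ is decreasing in $\beta$, so $\beta>5/3$ gives $R<5/2$ and $\beta-1>2/3$. Hence
\[
 2R+2<7,\qquad C_{\mathrm{sep}}=1+\frac{1+2R}{2(\beta-1)}<1+\frac{6}{4/3}=\frac{11}{2}.
\]

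Next, the $Q,a$-dependent constants.

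For $D_{Q,a}$: the factor $c_Q=(Q-2)/(Q-1)$ is increasing in $Q$, so $c_Q\ge 7/16$ for $Q\ge25/9$. Also $1-(1+a)/Q\ge 1-(10/9)/(25/9)=3/5$, and the numerator $2a$ is at most $2/9$. Therefore
\[
 D_{Q,a}\le\frac{2/9}{\pi\cdot\frac{7}{16}\cdot\frac35}=\frac{160}{189\pi}<0.27 .
\]

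For $B_{Q,a}$: the factor $Q/(Q-1)$ is decreasing in $Q$, so it is at most $25/16$. The factor $a/(1-a)$ is at most $1/8$. Therefore
\[
 B_{Q,a}\le\frac{25}{128}<0.2 .
\]

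Combining these bounds,
\[
 C_{\beta,Q}(a)<\log\bigl(7.27\cdot 5.5\bigr)+0.2<\log 40+0.2<3.69+0.2<4 .
\]

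The main point needing care is this final numerical step, because the product $(2R+2+D_{Q,a})C_{\mathrm{sep}}$ comes out just under $40$. The margin to $4$ is still about $0.1$, so rational upper bounds suffice: for instance $\log 40<3.7$ follows from $e^{3.7}>40$. I would present this with explicit rational bounds rather than decimals, to match the paper's rigorous style.
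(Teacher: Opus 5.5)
Your proposal is correct and follows the paper's proof: it uses the same bounds $R<5/2$, $C_{\mathrm{sep}}<11/2$, $c_Q\ge 7/16$, $1-(1+a)/Q\ge 3/5$, $D_{Q,a}\le 160/(189\pi)$ and $B_{Q,a}\le 25/128$, followed by a final numerical comparison. When you make that comparison rational, note that $\pi>3$ only gives $(7+160/567)\cdot\tfrac{11}{2}=45419/1134\approx 40.05$, so your ``$<40$'' step needs a sharper bound such as $\pi>3.14$; the paper instead keeps $\pi>3$, bounds the product by $41$, and checks $e^{15/4}>41$ with a Taylor partial sum.
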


\begin{proof}
The stated ranges give
\[
 R<\frac52,\quad C_{\mathrm{sep}}<\frac{11}{2},\quad
 c_Q\ge\frac7{16},\quad
 1-\frac{1+a}{Q}\ge\frac35,
\]
and therefore
\[
 D_{Q,a}\le\frac{160}{189\pi},\qquad
 B_{Q,a}\le\frac{25}{128}.
\]
It follows that
\[
 C_{\beta,Q}(a)<
 \log\!\left[\left(7+\frac{160}{189\pi}\right)\frac{11}{2}\right]
 +\frac{25}{128}.
\]
For an entirely rational check, $\pi>3$ bounds the argument of this
logarithm by $45419/1134<41$, while
\[
 e^{15/4}>
 \sum_{j=0}^{8}\frac{(15/4)^j}{j!}
 =\frac{2459867141}{58720256}>41.
\]
The displayed constant is consequently less than
$15/4+25/128=505/128<4$.
\end{proof}

We next use independent blocks to turn an entropy bound at one fixed
scale into a bound for Hausdorff dimension.

\begin{lemma}\label{lem:block}
For every fixed positive integer $N$ and every $\beta>1$,
\begin{equation}\label{eq:block-dimension}
 \dim_{\mathrm H}\nu_{\beta^{-1}}
 \le\frac{H_N}{N\log\beta}.
\end{equation}
\end{lemma}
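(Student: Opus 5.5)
Write $W=\sum_{m\ge0}\beta^{-mN}Y_m$, where $Y_m=\sum_{k=0}^{N-1}\varepsilon_{mN+k}\beta^{-k}$ are independent copies of $X_N$. Each $Y_m$ takes values in the finite set $A_N$ with law $p=(p_x)_{x\in A_N}$. Let $\Omega=A_N^{\mathbb N}$ carry the product measure $\mu=p^{\otimes\mathbb N}$, and consider the coding map
\[
\pi(y_0,y_1,\dots)=\sum_{m\ge0}\beta^{-mN}y_m .
\]
Then $\nu=\pi_*\mu$. We do not need injectivity: it suffices to exhibit a Borel set of full $\nu$-measure whose dimension is at most $s=H_N/(N\log\beta)$, up to an arbitrary $\delta>0$. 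The plan is a standard typical-cylinder cover.

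\emph{Step 1 (typical words).} Fix $\delta>0$. By the strong law of large numbers applied to the i.i.d.\ variables $-\log p_{Y_m}$, whose mean is $H_N$, for $\mu$-a.e.\ $y$ we have $-\frac1n\log\mu[y_0\dots y_{n-1}]\to H_N$. Let
\[
G_n=\{w\in A_N^n:\ \mu[w]\ge e^{-n(H_N+\delta)}\}.
\]
Since the cylinder masses sum to one, $\#G_n\le e^{n(H_N+\delta)}$. Moreover, $\mu$-a.e.\ $y$ has $y_0\dots y_{n-1}\in G_n$ for all large $n$. Consequently the set
\[
E_{n_0}=\pi\bigl(\{y:\ y_0\dots y_{n-1}\in G_n\ \forall n\ge n_0\}\bigr)
\]
satisfies $\nu\bigl(\bigcup_{n_0}E_{n_0}\bigr)=1$. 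The set $\pi(\cdot)$ is an image of a closed set under a continuous map from a compact space, hence compact and Borel. If necessary one may instead take $\limsup$-type sets, which are also Borel.

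\emph{Step 2 (covering).} For $w\in G_n$, the image $\pi([w])$ lies in an interval of length
\[
2\sum_{m\ge n}\beta^{-mN}\cdot\frac{R}{1}\cdot\beta^{-0}\le 2R\,\frac{\beta^{-nN}}{1-\beta^{-N}}=:c\,\beta^{-nN},
\]
using $|Y_m|\le R$. So for every $n\ge n_0$, $E_{n_0}$ is covered by at most $e^{n(H_N+\delta)}$ intervals of length $c\beta^{-nN}$. For $t>(H_N+\delta)/(N\log\beta)$ the corresponding $t$-dimensional Hausdorff sum is at most
\[
c^t\exp\bigl(n(H_N+\delta-tN\log\beta)\bigr)\to0 .
\]
Hence $\dimH E_{n_0}\le (H_N+\delta)/(N\log\beta)$. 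By countable stability the union has the same bound, and letting $\delta\downarrow0$ gives \eqref{eq:block-dimension} under the paper's convention for $\dimH\mu$.

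\emph{Main obstacle.} The only delicate points are bookkeeping rather than substance. First, atoms of $X_N$ must be merged, so that the entropy used is $H_N$ and not $N\log2$; working with the alphabet $A_N$ of distinct values handles this automatically. Second, the full-measure set must be Borel, which the compactness argument above provides. No separation or exact-dimensionality input is needed, since non-injectivity of $\pi$ only helps the upper bound.
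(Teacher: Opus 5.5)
Your proposal is correct and follows essentially the same argument as the paper: block the signs into i.i.d.\ merged copies of $X_N$, use the strong law to select at most $e^{n(H_N+\delta)}$ typical words, and cover by intervals of length comparable to $\beta^{-nN}$. The only differences are cosmetic. You take the full-measure set to be the compact image of the closed symbolic set of eventually-typical sequences, where the paper uses $\bigcup_q\bigcap_{m\ge q}F_m$. You also bound the tail by $R\beta^{-nN}/(1-\beta^{-N})$ rather than by $R\beta^{-nN}$, which is harmless since $N$ is fixed.
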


\begin{proof}
We group the original independent signs into consecutive blocks of
length $N$ and write
\[
 W=\sum_{j=0}^\infty\beta^{-Nj}B_j,
\]
where the $B_j$ are independent and identically distributed, each
with the merged distribution $(p_b)_{b\in A_N}$ of $X_N$.
Since $A_N$ is finite and $p_b>0$ for every $b\in A_N$, the
random variables $-\log p_{B_j}$ are integrable and have mean $H_N$.
The strong law of large numbers therefore gives
\[
 -\frac1m\sum_{j=0}^{m-1}\log p_{B_j}
 \longrightarrow H_N
 \qquad\text{almost surely}.
\]
Fix $\varepsilon>0$. For each $m\ge1$, define
\[
T_m
:=
\left\{
(b_0,\ldots,b_{m-1})\in A_N^m :
\prod_{j=0}^{m-1}p_{b_j}
\ge e^{-m(H_N+\varepsilon)}
\right\}.
\]
As the probabilities of all block words of length $m$ sum to one,
\[
 \#T_m\le e^{m(H_N+\varepsilon)}.
\]
Now fix a word $(b_0,\ldots,b_{m-1})\in T_m$.
The contribution of the first $m$ blocks to $W$ is
$\sum_{j=0}^{m-1}\beta^{-Nj}b_j$. The remaining tail is
$\beta^{-Nm}$ times a copy of $W$, whose absolute value is at most
$R\beta^{-Nm}$. Thus every infinite extension of this word
determines a point in the closed interval
\[
 \sum_{j=0}^{m-1}\beta^{-Nj}b_j
 +[-R\beta^{-Nm},R\beta^{-Nm}].
\]
Define
\[
 F_m=
 \bigcup_{(b_0,\ldots,b_{m-1})\in T_m}
 \left(
 \sum_{j=0}^{m-1}\beta^{-Nj}b_j
 +[-R\beta^{-Nm},R\beta^{-Nm}]
 \right).
\]
Then $F_m$ is a finite union of closed intervals, each of diameter
$2R\beta^{-Nm}$, and the number of these intervals is at most
$e^{m(H_N+\varepsilon)}$.

Define
\[
 F^{(q)}=\bigcap_{m\ge q}F_m,
 \qquad
 F=\bigcup_{q\ge1}F^{(q)}.
\]
These sets are Borel. By the strong law, almost every block sequence
has its length-$m$ prefix in $T_m$ for all sufficiently large $m$.
The point determined by such a sequence consequently belongs to
$F_m$ for all sufficiently large $m$, and hence to $F$.
Therefore $\nu(F)=1$.

Fix $q\ge1$ and take
\[
 t>\frac{H_N+\varepsilon}{N\log\beta}.
\]
For every $m\ge q$, we have $F^{(q)}\subset F_m$, so the intervals
defining $F_m$ give a cover of $F^{(q)}$. The sum of the $t$-powers
of their diameters is at most
\[
 e^{m(H_N+\varepsilon)}(2R\beta^{-Nm})^t
 =(2R)^t
 \exp\!\{m(H_N+\varepsilon-Nt\log\beta)\}.
\]
Since $H_N+\varepsilon-Nt\log\beta<0$, this expression tends to
zero as $m\to\infty$. The diameters of the covering intervals
also tend to zero. It follows that
$\mathcal H^t(F^{(q)})=0$.

As $F$ is the countable union of the sets $F^{(q)}$, we have
$\mathcal H^t(F)=0$. Together with $\nu(F)=1$, this gives
\[
 \dim_{\mathrm H}\nu_{\beta^{-1}}
 \le \dim_{\mathrm H}F
 \le\frac{H_N+\varepsilon}{N\log\beta}.
\]
Letting $\varepsilon\downarrow0$ proves
\eqref{eq:block-dimension}.
\end{proof}

\section{Quartic Salem parameters and algebraic seeds}
\label{sec:algebra}

We use the reciprocal construction and controlled returns from
\cite[Sections~4--5]{CFMZ}. The real embeddings give geometric tails,
while the unit-circle pair contributes an oscillatory term. We first
choose fixed power seeds and then use returns of the unit-circle
conjugates to reduce the oscillatory term.

\subsection{The two parameters in the overlapping range}

\begin{proposition}\label{prop:quartic-classification}
The quartic Salem numbers in $(1,2)$ are precisely the roots greater than
one of
\[
 P_1(X)=X^4-X^3-X^2-X+1,\qquad
 P_2(X)=X^4-2X^3+X^2-2X+1.
\]
\end{proposition}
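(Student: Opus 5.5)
A quartic Salem number $\beta$ has minimal polynomial $P$ that is reciprocal, since $\beta^{-1}$ is a conjugate. I would therefore write
\[
 P(X)=X^4+bX^3+cX^2+bX+1,\qquad b,c\in\Z,
\]
and use the substitution $u=X+X^{-1}$. This gives $X^{-2}P(X)=q(u)$ with
\[
 q(u)=u^2+bu+(c-2).
\]
The conjugates $\beta,\beta^{-1}$ correspond to the root $u_0=\beta+\beta^{-1}>2$ of $q$. The unit-circle pair $\zeta,\bar\zeta$ corresponds to the other root $u_0'=2\operatorname{Re}\zeta\in(-2,2)$; here $\zeta$ is not real, so $u_0'\neq\pm2$. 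Conversely, if $q$ is an irreducible integer quadratic with one root greater than $2$ and the other in $(-2,2)$, then the root $\beta>1$ of $X^2-u_0X+1$ is a Salem number of degree four. Irreducibility of $P$ follows because $\beta$ has degree $2[\mathbb Q(u_0):\mathbb Q]=4$. So the classification reduces to finding all such quadratics $q$ with $u_0\in(2,\,2+\tfrac12)$, because $\beta<2$ is equivalent to $u_0=\beta+\beta^{-1}<5/2$.

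The key steps, in order:
\begin{enumerate}
\item Reduce to $q$ as above.
\item The sum of the roots is $-b=u_0+u_0'$, which lies in $(0,\,9/2)$ because $u_0\in(2,5/2)$ and $u_0'\in(-2,2)$. Hence $-b\in\{1,2,3,4\}$.
\item The product of the roots is $c-2=u_0u_0'=u_0(-b-u_0)$. For each value $s=-b$, this is the function $g(u_0)=u_0(s-u_0)$, evaluated with $u_0\in(2,5/2)$ and subject to $s-u_0\in(-2,2)$. Its range is a short open interval, so only finitely many integers $c$ remain, and I will list them.
\end{enumerate}

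Concretely, irreducibility requires $u_0$ to be irrational, so the discriminant $s^2-4(c-2)$ must be a positive non-square. I then need to check $2<u_0<5/2$ and $-2<s-u_0<2$ directly:
\begin{itemize}
\item $s=1$: the product lies in $(-3.75,-2)$, so $c-2=-3$. Then $q=u^2-u-3$, with $u_0=(1+\sqrt{13})/2\approx2.30$ and $u_0'\approx-1.30$. This is valid and gives $P_1$.
\item $s=2$: $c-2\in(-1.25,0)$, so $c-2=-1$. Then $q=u^2-2u-1$, with $u_0=1+\sqrt2\approx2.41$ and $u_0'\approx-0.41$. This gives $P_2$.
\item $s=3$: the product lies in $(1.25,2)$ and contains no integer.
\item $s=4$: we need $u_0'=4-u_0\in(1.5,2)$, so the product lies in $(3.75,4)$ and again contains no integer.
\end{itemize}
Finally, I will expand $X^2q(X+X^{-1})$ to recover $P_1$ and $P_2$. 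This matches the formulas $u_i$ stated in the introduction.

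The main point needing care is the converse direction together with the boundary cases. I have to confirm that $u_0'\in(-2,2)$ really gives nonreal conjugates of modulus one: the roots of $X^2-u_0'X+1$ have product $1$ and negative discriminant. I also have to confirm that $P$ is irreducible and not a product of two quadratics, so that degree four genuinely holds. Both follow from the irrationality of $u_0$. The remaining enumeration is elementary bookkeeping over the finite list of candidates.
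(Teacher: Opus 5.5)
Your proposal is correct and is essentially the paper's argument: the substitution $u=X+X^{-1}$, the bound $0<u+v<9/2$ forcing $-b=A_P\in\{1,2,3,4\}$, and a case-by-case range of the product of the trace roots over $u\in(2,5/2)$, which leaves only $(b,c)=(-1,-1)$ and $(-2,1)$ (you track $c-2=uv$ where the paper tracks $B_P=uv+2$). The difference is only organizational. You sketch the converse inline and derive irreducibility from the irrationality of $u_0$; this suffices, since a real rational quadratic factor of $P$ would have to be $X^2-u_0X+1$ or $X^2-u_0'X+1$. The paper instead defers the converse to Lemma~\ref{lem:roots}, where it gets $[\mathbb Q(\beta):\mathbb Q]=4$ by noting that $u^2-4$ is not a square in $\mathbb Q(u)$ because its conjugate $v^2-4$ is negative.
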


\begin{proof}
The minimal polynomial of a quartic Salem number is a monic reciprocal
integer polynomial
\[
 P(X)=X^4-A_PX^3+B_PX^2-A_PX+1.
\]
Write its roots as $\beta,\beta^{-1},\zeta,\bar\zeta$, and put
$u=\beta+\beta^{-1}$ and $v=\zeta+\bar\zeta$. If $1<\beta<2$, then
$2<u<5/2$ and $-2<v<2$. Comparing coefficients gives
\[
 A_P=u+v,\qquad B_P=uv+2=A_Pu-u^2+2.
\]
Consequently $A_P\in\{1,2,3,4\}$. As $u$ varies in $(2,5/2)$, the
possible ranges for the second expression are
\[
 \begin{array}{c|cccc}
 A_P&1&2&3&4\\ \hline
 A_Pu-u^2+2&(-7/4,0)&(3/4,2)&(13/4,4)&(23/4,6).
 \end{array}
\]
The only integer possibilities are $(A_P,B_P)=(1,-1)$ and $(2,1)$.
Lemma~\ref{lem:roots} shows that both polynomials are irreducible
and have the required Salem roots in $(1,2)$.
\end{proof}

\begin{lemma}\label{lem:roots}
For $i=1,2$, the polynomial $P_i$ is irreducible over $\mathbb Q$.
Its roots are $\beta_i,r_i=\beta_i^{-1},\zeta_i,\bar\zeta_i$, where
$\operatorname{Im}\zeta_i>0$, and
\begin{equation*}
 \frac53<\beta_i<2,\qquad \frac12<r_i<\frac35,\qquad
 d_i:=|1-\zeta_i^2|>\frac32,\qquad |P_i'(\zeta_i)|>1.
\end{equation*}
Neither $\zeta_i$ nor $\zeta_i^2$ is a root of unity. The reciprocal
Laurent polynomials
\begin{equation*}
 U_1(X)=1+X+X^{-1},\qquad U_2(X)=X+X^{-1}
\end{equation*}
satisfy $U_i(\beta_i)=U_i(r_i)>0$ and
$0<|U_i(\zeta_i)|<1/2$.
\end{lemma}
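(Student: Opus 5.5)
The plan is to use the reciprocal substitution $u=X+X^{-1}$ throughout. Since $P_i$ is reciprocal of degree four, $X^{-2}P_i(X)=q_i(X+X^{-1})$ with
\[
 q_1(u)=u^2-u-3,\qquad q_2(u)=u^2-2u-1 .
\]
Their roots are $u_1=(1+\sqrt{13})/2$ with conjugate $v_1=(1-\sqrt{13})/2$, and $u_2=1+\sqrt2$ with conjugate $v_2=1-\sqrt2$. In both cases $u_i\in(2,5/2)$, and we check $|v_i|<2$ (indeed $v_1\approx-1.30$ and $v_2\approx-0.414$).

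\textbf{Roots and their location.} Solving $X^2-uX+1=0$ gives the following.
\begin{itemize}
\item[(i)] From $u_i$ we get the two real roots $\beta_i=(u_i+\sqrt{u_i^2-4})/2$ and $r_i=\beta_i^{-1}$.
\item[(ii)] From $v_i$ we get a conjugate pair on the unit circle, $\zeta_i=e^{i\theta_i}$ with $2\cos\theta_i=v_i$.
\end{itemize}
The bounds $5/3<\beta_i<2$ are equivalent to $u\in(5/3+3/5,\,5/2)=(34/15,\,5/2)$, since $u=\beta+\beta^{-1}$ is increasing for $\beta>1$. This reduces to rational inequalities on $\sqrt{13}$ and $\sqrt2$: we need $\sqrt{13}>53/15$, i.e.\ $13\cdot225=2925>2809$, and $\sqrt2>19/15$, which hold. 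The bound $1/2<r_i<3/5$ then follows from $r_i=1/\beta_i$.

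\textbf{Irreducibility.} The roots $\pm1$ are excluded because $P_1(1)=-1$, $P_1(-1)=3$, $P_2(1)=-1$, $P_2(-1)=7$. Next, $q_i$ is irreducible over $\mathbb Q$ because its discriminant, $13$ or $8$, is not a square. Any rational factor of $P_i$ would produce a factor whose roots are a proper subset of $\{\beta_i,r_i,\zeta_i,\bar\zeta_i\}$. Concretely:
\begin{itemize}
\item a quadratic factor $X^2-sX+t$ with integer coefficients must have $t=\pm1$;
\item its roots must be either $\{\beta_i,r_i\}$ (giving $s=u_i\notin\mathbb Q$), or $\{\zeta_i,\bar\zeta_i\}$ (giving $s=v_i\notin\mathbb Q$), or a mixed pair.
\end{itemize}
A mixed pair such as $\{\beta_i,\zeta_i\}$ is impossible since the product is non-real. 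A pair like $\{\beta_i,\bar\zeta_i\}$ fails the same way, and the remaining mixed cases likewise give a non-real product or sum. A linear factor is excluded because a rational root of a monic integer polynomial with constant term $1$ must be $\pm1$.

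\textbf{Roots of unity and the quantities $d_i$, $|P_i'(\zeta_i)|$.} If $\zeta_i$ or $\zeta_i^2$ were a root of unity, then $\zeta_i$ would be one. Its minimal polynomial $P_i$ would then be cyclotomic, which contradicts the real root $\beta_i>1$.

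For $d_i$ we use $|1-\zeta^2|=|\zeta^{-1}-\zeta|=2|\sin\theta|=\sqrt{4-v_i^2}$. This gives
\[
 d_1^2=4-\tfrac{14-2\sqrt{13}}{4},\qquad d_2^2=4-(3-2\sqrt2),
\]
and both exceed $9/4$ by direct rational checks.

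For $P_i'(\zeta_i)$ we factor
\[
 P_i(X)=(X^2-u_iX+1)(X^2-v_iX+1),
\]
so that
\[
 P_i'(\zeta_i)=(\zeta_i^2-u_i\zeta_i+1)(2\zeta_i-v_i).
\]
Here $|2\zeta_i-v_i|=|\zeta_i-\bar\zeta_i|=d_i$. Also $\zeta_i^2-u_i\zeta_i+1=\zeta_i(v_i-u_i)$, hence $|P_i'(\zeta_i)|=(u_i-v_i)\,d_i$. Since $u_1-v_1=\sqrt{13}$ and $u_2-v_2=2\sqrt2$, both exceed $1$, which gives the bound.

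\textbf{The Laurent polynomials.} We have $U_1(X)=1+u$ and $U_2(X)=u$ in terms of $u=X+X^{-1}$. At $\beta_i$ and $r_i$ these equal $1+u_1>0$ and $u_2>0$, respectively, and they coincide at $\beta_i$ and $r_i$. At $\zeta_i$ they equal
\[
 U_1(\zeta_1)=1+v_1=\tfrac{3-\sqrt{13}}2\approx-0.303,\qquad U_2(\zeta_2)=v_2=1-\sqrt2\approx-0.414 .
\]
Both are nonzero with modulus below $1/2$, which reduces to $\sqrt{13}<4$ and $\sqrt2<3/2$.

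The main (minor) obstacle is keeping irreducibility clean. I would handle it by the argument above that any rational factor forces $u_i$ or $v_i$ to be rational. The remaining steps are routine checks of quadratic surds.
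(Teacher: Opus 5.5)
Your proposal is correct and follows the paper's proof of Lemma~\ref{lem:roots} essentially step for step: the trace substitution, monotonicity of $t+t^{-1}$ for the bounds on $\beta_i,r_i$, $d_i^2=4-v_i^2$, the identity $|P_i'(\zeta_i)|=(u_i-v_i)d_i$ from the factorization, the cyclotomic argument for roots of unity, and the direct evaluation of $U_i$ all match. The only difference is irreducibility, where you rule out linear and quadratic factors by the rational root test and by pairing roots (forcing $u_i$ or $v_i$ to be rational, or a non-real coefficient), whereas the paper uses the tower $\mathbb{Q}\subset\mathbb{Q}(u_i)\subset\mathbb{Q}(\beta_i)$ and the fact that $u_i^2-4$ is not a square in $\mathbb{Q}(u_i)$ because its conjugate $v_i^2-4$ is negative; both arguments are complete.
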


\begin{proof}
The trace variable gives the identity
\[
 X^{-2}P(X)=(X+X^{-1})^2-A_P(X+X^{-1})+(B_P-2).
\]
The two trace roots are
\begin{equation}\label{eq:trace-roots}
\begin{aligned}
 (u,v)
 &=\left(\frac{1+\sqrt{13}}2,
         \frac{1-\sqrt{13}}2\right)
 &&\text{for }P_1,\\
 (u,v)
 &=(1+\sqrt2,1-\sqrt2)
 &&\text{for }P_2.
\end{aligned}
\end{equation}
Squaring the corresponding positive rational bounds gives
$34/15<u<5/2$ and $-2<v<2$ in both cases. Thus the four roots are
\[
 \beta=\frac{u+\sqrt{u^2-4}}2,\qquad
 r=\frac{u-\sqrt{u^2-4}}2,\qquad
 \zeta=\frac{v+i\sqrt{4-v^2}}2,\qquad \bar\zeta.
\]
Since $t+t^{-1}$ is strictly increasing for $t>1$, and its values at
$5/3$ and $2$ are $34/15$ and $5/2$, respectively, the stated bounds
for $\beta$ and $r$ follow.

The trace polynomials have nonsquare discriminants $13$ and $8$.
Thus $\mathbb Q(u)$ is a real quadratic field. Its other embedding sends
$u^2-4$ to $v^2-4<0$, so $u^2-4$ cannot be a square in
$\mathbb Q(u)$. The quadratic $X^2-uX+1$ is therefore irreducible over
$\mathbb Q(u)$. As $u=\beta+\beta^{-1}\in\mathbb Q(\beta)$, this proves
$[\mathbb Q(\beta):\mathbb Q]=4$, and hence the irreducibility of $P$.
In particular, $\beta$ is an algebraic unit.
If $\zeta$ were a root of unity, irreducibility would make $P$ divide
some $X^n-1$, contradicting $\beta>1$. The same conclusion holds for
$\zeta^2$.

For the first value of $v$ in \eqref{eq:trace-roots}, one has
$v<-5/4$ and $v^2=v+3<7/4$. For the second, $-1/2<v<0$, so again
$v^2<7/4$. It follows that
\[
 |1-\zeta^2|^2=4-v^2>\frac94.
\]
Differentiating
$P(X)=(X^2-uX+1)(X^2-vX+1)$ at $\zeta$ gives
\[
 |P'(\zeta)|=(u-v)\sqrt{4-v^2}=(u-v)d>1,
\]
because $u-v$ is $\sqrt{13}$ or $2\sqrt2$.
Finally,
\[
 U_1(\zeta_1)=\frac{3-\sqrt{13}}2,\qquad
 U_2(\zeta_2)=1-\sqrt2
\]
both lie in $(-1/2,0)$, whereas their values at $\beta_i$ and $r_i$
are $u+1$ and $u$, respectively.
\end{proof}

\subsection{Reciprocal orbits and power seeds}

We now separate the real and unit-circle contributions to the orbit.
Fix one of these polynomials and suppress the parameter index.
For every root $\lambda$ of $P$, put
\[
 \kappa_\lambda=\frac1{P'(\lambda)},\qquad
 T_m=\sum_{P(\lambda)=0}\kappa_\lambda\lambda^m
 \quad(m\in\mathbb Z).
\]
Lagrange interpolation for $1,X,X^2,X^3$, followed by comparison
of the coefficient of $X^3$, gives $T_0=T_1=T_2=0$ and $T_3=1$.
The sequence $(T_m)$ satisfies the recurrence defined by $P$.
Since the leading and constant coefficients of $P$ are both one,
the recurrence gives integer values in both directions. Hence
\begin{equation}\label{eq:trace-int}
 T_m\in\mathbb Z\quad(m\in\mathbb Z),\qquad
 \sum_{P(\lambda)=0}\kappa_\lambda\lambda^{i-1}K(\lambda)
 \in\mathbb Z
\end{equation}
for every $K\in\mathbb Z[X,X^{-1}]$ and $i\in\mathbb Z$.

Suppose that $K(\beta)\ne0$ and
\begin{equation}\label{eq:reciprocal-K}
 K(X^{-1})=\sigma X^{-2s_K}K(X),\qquad
 \sigma\in\{-1,1\},\quad s_K\in\mathbb Z.
\end{equation}
Irreducibility implies $K(\zeta)\ne0$: otherwise a monomial multiple
of $K$ would be divisible by $P$ and would also vanish at $\beta$.
Define
\begin{equation*}
 \begin{split}
 j_*&=4-2s_K,\qquad
 \xi_i=-\kappa_\beta\beta^{i-1}K(\beta),\\
 \chi_i&=2\operatorname{Re}
       \bigl(\kappa_\zeta\zeta^{i-1}K(\zeta)\bigr),\qquad
 \Delta=2|\kappa_\zeta K(\zeta)|>0 .
 \end{split}
\end{equation*}
Differentiation of the reciprocal identity for $P$ at a root yields
$P'(\lambda^{-1})=-\lambda^{-2}P'(\lambda)$.
In \eqref{eq:trace-int}, the expanding term is $-\xi_i$ and the
stable term is $\sigma\xi_{j_*-i}$. Hence
\begin{equation}\label{eq:reflection}
 \xi_i\equiv\sigma\xi_{j_*-i}+\chi_i\pmod{\mathbb Z},
 \qquad \chi_{j_*-i}=-\sigma\chi_i.
\end{equation}
For the second identity, one may directly use
\[
 \overline{\kappa_\zeta\zeta^{i-1}K(\zeta)}
 =-\sigma\kappa_\zeta\zeta^{j_*-i-1}K(\zeta).
\]
Write
\[
 \operatorname{can}(t)=t-\lfloor t+1/2\rfloor,\qquad
 Y_i=\operatorname{can}(\xi_i)\quad(i\in\mathbb Z).
\]
By construction, $Y_i\equiv\xi_i\pmod{\mathbb Z}$ for every
$i\in\mathbb Z$.

\begin{definition}\label{def:packet}
A \emph{reciprocal localized orbit} associated with $K$ has an integer
core $[a,b]$ satisfying $a+b=j_*$ and $\ell=b-a+1\ge5$, with
\[
 Y_i=\xi_i\quad(i<a),\qquad
 Y_i=\sigma\xi_{j_*-i}+\chi_i\quad(i>b).
\]
Its localization is
\[
 H_i=
 \begin{cases}
  Y_i,&i\le b,\\
  \sigma\xi_{j_*-i},&i>b.
 \end{cases}
\]
We require $H_i\in(-1/2,1/2)$ for every $i$.
Set
\[
 h_*=|\xi_a|,\qquad
 V(t)=-\log\cos^2(\pi t),\qquad
 E=\sum_{i\in\mathbb Z}V(H_i).
\]
The two localized real tails have absolute values $h_*r^j$, $j\ge1$.
A boundary distance of at least $\vartheta$ means
$|H_i|\le1/2-\vartheta$ for every $i$.
\end{definition}
The two sequences serve different purposes. The congruence
$Y_i\equiv\xi_i\pmod{\mathbb Z}$ is used to identify the
Fourier coefficients, while $H$ is used to control the action.
They agree for $i\le b$, and $Y_i=H_i+\chi_i$ for $i>b$.
We will later estimate the cost of restoring this oscillation.

We will use the following elementary bounds for the potential.
For $0\le t<1/2$, let
$\mathcal C(t)=\pi^2\sec^2(\pi t)$. On $[-t,t]$,
\begin{equation}\label{eq:potential-basic}
 V(u)\le\mathcal C(t)u^2,\qquad
 V''(u)\le2\mathcal C(t),\qquad
 |V'(u)|^2\le4\mathcal C(t)V(u).
\end{equation}
The first two follow from $V(0)=V'(0)=0$ and
$V''(u)=2\pi^2\sec^2(\pi u)$.
For the third, use
$V(u)=-\log(\cos^2(\pi u))\ge\sin^2(\pi u)$ and
$V'(u)=2\pi\tan(\pi u)$.
These bounds show that the action is finite. The boundary distance
is positive because the core is finite and the real tails decrease
geometrically. If that distance is at least $\vartheta$,
Cauchy--Schwarz gives
\begin{equation}\label{eq:core-derivative-bound}
 W_c:=\sum_{i=a}^b|V'(Y_i)|
 \le2\sqrt{\mathcal C(1/2-\vartheta)\,\ell E}.
\end{equation}

The orbit depends only on the values of $K$ at the roots of $P$.
Thus, for the purpose of evaluating a fixed orbit, we may replace
$K$ by any Laurent polynomial congruent to it modulo $P$.
The reciprocal identity \eqref{eq:reciprocal-K} is still understood
with respect to the original representative.
For any integer $s$, replacing $K$ by $X^sK$ shifts
the orbit indices by $s$ and changes the core to $[a-s,b-s]$.
The action, amplitude, real-tail coefficient, and boundary distance
remain unchanged. We will use these observations to compare different
representatives of the same seed.

The following fixed powers provide the starting orbits.

\begin{lemma}\label{lem:seed}
For $P_1$, define $K_{\rm s}:=U_1^3$, and for $P_2$, define
$K_{\rm s}:=U_2^4$. In each case, $K_{\rm s}$ admits a finite core
and defines a reciprocal localized orbit satisfying
\begin{equation*}
 0<h_*<\frac1{100},\qquad
 0<\Delta_{\rm s}<\frac14,\qquad E_{\rm s}<\infty,
\end{equation*}
with a positive global boundary distance $\vartheta_{\rm s}$.
\end{lemma}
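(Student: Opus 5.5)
The plan is to verify Definition~\ref{def:packet} directly for $K_{\rm s}$, with the core chosen far enough out. Both $U_1$ and $U_2$ satisfy $U_i(X^{-1})=U_i(X)$. Hence $K_{\rm s}$ satisfies \eqref{eq:reciprocal-K} with $\sigma=1$ and $s_K=0$, so $j_*=4$. Lemma~\ref{lem:roots} gives $K_{\rm s}(\beta)=U_i(\beta)^p>0$ (with $p=3$ or $4$), so the construction of the preceding subsection applies.

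\textbf{Amplitude.} By Lemma~\ref{lem:roots}, $|U_i(\zeta)|<1/2$ and $|P'(\zeta)|>1$. Hence
\[
 \Delta_{\rm s}=2|U_i(\zeta)|^p/|P'(\zeta)|<2\cdot 2^{-3}=\tfrac14,
\]
and $\Delta_{\rm s}>0$ because $U_i(\zeta)\neq0$.

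\textbf{Choice of core.} The sequence $\xi_i=-\kappa_\beta\beta^{i-1}K_{\rm s}(\beta)$ is a nonzero real geometric sequence, with $|\xi_i|\to0$ as $i\to-\infty$. I would choose an integer $a\le 0$ with $|\xi_a|<1/100$ and put $b=4-a$. Then $a+b=j_*$ and $\ell=5-2a\ge5$. For the explicit numbers, $|\kappa_\beta|=1/|P'(\beta)|$ with $P'(\beta)=\beta(u-v)(\beta-r)$, so a rational estimate determines how negative $a$ must be. With this choice, $h_*=|\xi_a|\in(0,1/100)$.

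\textbf{Tail conditions.} For $i<a$ we have $|\xi_i|<h_*<1/2$, so $Y_i=\operatorname{can}(\xi_i)=\xi_i$. For $i>b$ we have $j_*-i<a$. By \eqref{eq:reflection}, $Y_i\equiv\xi_{j_*-i}+\chi_i\pmod{\mathbb Z}$, and
\[
 |\xi_{j_*-i}+\chi_i|\le h_*+\Delta_{\rm s}<\tfrac1{100}+\tfrac14<\tfrac12 .
\]
The canonical representative is therefore exactly $\xi_{j_*-i}+\chi_i$, as required. The localized values are then:
\begin{itemize}
\item for $i<a$ and for $i>b$, $H_i$ equals $\xi_i$ or $\xi_{j_*-i}$, both bounded by $h_*r^{|i-a|}$ or $h_*r^{|i-b|}$;
\item for $a\le i\le b$, $H_i=Y_i\in[-1/2,1/2)$.
\end{itemize}

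\textbf{Excluding the endpoint.} To get $H_i\ne-1/2$ on the core, note that $Y_i=-1/2$ would force $2\xi_i\in\mathbb Z$. This is impossible because $\xi_i$ is a nonzero element of $\mathbb Q(\beta)$ with distinct conjugates (its conjugates are nonzero multiples of $\lambda^{i-1}K_{\rm s}(\lambda)/P'(\lambda)$), so it is irrational.

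\textbf{Boundary distance and action.} The core is finite, and outside it $|H_i|\le h_*$. So $\vartheta_{\rm s}:=\min\bigl(1/2-h_*,\ \min_{a\le i\le b}(1/2-|Y_i|)\bigr)>0$. Finiteness of $E_{\rm s}$ follows from \eqref{eq:potential-basic} with $t=1/2-\vartheta_{\rm s}$:
\[
 E_{\rm s}\le \mathcal C(t)\Bigl(\sum_{a\le i\le b}Y_i^2+\frac{2h_*^2r^2}{1-r^2}\Bigr)<\infty .
\]

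The only nonroutine step is making $h_*<1/100$ concrete, i.e.\ bounding $\kappa_\beta K_{\rm s}(\beta)$ and fixing $a$. Using rational enclosures of $\beta$ from Lemma~\ref{lem:roots}, I expect a modest negative $a$ to suffice. For the qualitative statement, the existence of such an $a$ is all that is needed.
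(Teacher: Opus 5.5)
Your proposal follows the paper's proof essentially step for step: $\sigma=1$ and $j_*=4$, the amplitude bound from $|U(\zeta)|<1/2$ and $|P'(\zeta)|>1$, the choice of $a$ with $|\xi_a|<1/100$ and $b=4-a$, the reflection argument for the right tail, and the same formula for $\vartheta_{\rm s}$. The one step you assert without proof is that $\xi_i$ has distinct conjugates. The paper proves it by a sign comparison: $\xi_i<0$ since $P'(\beta)>0$, while its image under $\beta\mapsto r$ is $-\kappa_r r^{i-1}K_{\rm s}(r)>0$ since $P'(r)<0$ and $K_{\rm s}(r)=K_{\rm s}(\beta)>0$. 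Alternatively, a rational $\xi_i$ would equal its $\zeta$-conjugate, whose modulus is $\Delta_{\rm s}/2<1/8$, so it could not lie in $\mathbb{Z}+\tfrac12$.
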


\begin{proof}
Fix either polynomial and write $K_{\rm s}=U^m$, where $m=3$ or $4$
as specified. Since $U(X^{-1})=U(X)$, the reciprocal identity
\eqref{eq:reciprocal-K} holds with $\sigma=1$ and $s_K=0$, so
$j_*=4$. Moreover, Lemma~\ref{lem:roots} gives
$K_{\rm s}(\beta)=K_{\rm s}(r)>0$,
$|\kappa_\zeta|<1$, and $0<|U(\zeta)|<1/2$.
Thus the amplitude satisfies
\[
 0<\Delta_{\rm s}
 =2|\kappa_\zeta U(\zeta)^m|
 <2^{1-m}\le\frac14.
\]

We first show that every $\xi_i$ is irrational. Since
\[
 P'(\beta)=(\beta-r)|\beta-\zeta|^2>0,\qquad
 P'(r)=(r-\beta)|r-\zeta|^2<0,
\]
the definition of $\xi_i$ and the positivity of
$K_{\rm s}(\beta)=K_{\rm s}(r)$ imply that $\xi_i<0$, whereas
its image under the embedding of $\mathbb Q(\beta)$ sending
$\beta$ to $r$ is
\[
 -\kappa_r r^{i-1}K_{\rm s}(r)>0.
\]
As rational numbers are fixed by this embedding, $\xi_i$
cannot be rational. In particular,
$\xi_i\notin\mathbb Z+1/2$, and hence
$Y_i\in(-1/2,1/2)$ for every $i\in\mathbb Z$.

Since $|\xi_i|\to0$ as $i\to-\infty$, we may choose an integer
$a\le0$ sufficiently negative that
\[
 h_*:=|\xi_a|<\frac1{100},
\]
and set $b=4-a$. Then $a+b=j_*=4$ and
$b-a+1=5-2a\ge5$, as required for a finite core.

For $i<a$, we have
\[
 |\xi_i|=h_*r^{a-i}<\frac1{100}<\frac12,
\]
so $Y_i=\xi_i$. On the other hand, if $i>b$, then
$4-i=a-(i-b)$, and therefore
\[
 |\xi_{4-i}+\chi_i|
 \le h_*r^{i-b}+\Delta_{\rm s}
 <\frac1{100}+\frac14<\frac12.
\]
By the reflection identity \eqref{eq:reflection}, the quantity
$\xi_{4-i}+\chi_i$ is congruent to $\xi_i$ modulo $\mathbb Z$.
Since it lies in $(-1/2,1/2)$, it must equal $Y_i$.
This proves the two tail identities required in
Definition~\ref{def:packet}.

The resulting localized orbit has two real tails whose absolute
values are $h_*r^j$, $j\ge1$. Since the core is finite and
$|Y_i|<1/2$ for every $i$, it follows that
\[
 \vartheta_{\rm s}=
 \min\left\{
       \min_{a\le i\le b}(1/2-|Y_i|),\,
       1/2-h_*
     \right\}>0
\]
is a global boundary distance.

Finally, the action on the finite core is finite because
$Y_i\in(-1/2,1/2)$. For the two tails,
\eqref{eq:potential-basic} gives
\[
 \sum_{i<a}V(H_i)+\sum_{i>b}V(H_i)
 =2\sum_{j=1}^\infty V(h_*r^j)
 \le\frac{2\mathcal C(h_*)h_*^2r^2}{1-r^2}
 <\infty.
\]
Thus $E_{\rm s}<\infty$, completing the proof.
\end{proof}

\section{A reciprocal mirror and its action}
\label{sec:mirror}

For the remainder of the construction write $d=|1-\zeta^2|$.
Subtracting a distant translate produces two copies of the old core,
with a gap between them. A return of $\zeta^L$ towards one reduces the
outgoing oscillation. We estimate the action in these three regions
and in the two exterior tails. The geometric-sum estimate will be
used in the numerical specializations; the pointwise estimate
will suffice for the existence argument.

\begin{lemma}\label{lem:mirror}
Let a reciprocal localized orbit have core $[a,b]$, length $\ell$,
action $E$, amplitude $\Delta>0$, and boundary distance at least
$\vartheta>0$. Let its actual real-tail coefficient satisfy $h_*\le h$.
For an even integer $L>\ell$, set
\[
 g=L-\ell,\qquad
 \Delta^+=\Delta|1-\zeta^L|,\qquad
 \rho=hr^{g+1},\qquad U=\Delta+2hr.
\]
Suppose
\begin{equation}\label{eq:mirror-conditions}
 \Delta^++\rho\le\vartheta/2,\qquad
 U<1/2,\qquad \Delta^++hr<1/2.
\end{equation}
Then $K^+=(1-X^{-L})K$ defines a reciprocal localized orbit with
core $[a,b+L]$, amplitude $\Delta^+>0$, and actual real-tail
coefficient $h_*^+=(1-r^L)h_*$. Its boundary distance is at least
\begin{equation}\label{eq:margin-update}
 \min\{\vartheta-\Delta^+-\rho,\;1/2-U,\;1/2-hr\}.
\end{equation}
With $W_c$ as in \eqref{eq:core-derivative-bound}, put
\[
 A_\vartheta=2\pi^2\csc^2(\pi\vartheta/2),\qquad
 G_U=2\pi^2\sec^2(\pi U).
\]
The full localized action satisfies
\begin{align}
 E^+\le{}&
 2E+W_c(\Delta^++2\rho)
       +A_\vartheta\ell(\Delta^++\rho)^2 \notag\\
 &+G_U\left\{
 h^2gr^{g+1}+\frac{2\Delta hr}{1-r}
       +\Delta^2\left(\frac g4+\frac1{2d}\right)
       \right\}.                                               \label{eq:mirror-action}
\end{align}
It also satisfies the pointwise estimate
\begin{align}
 E^+\le{}&
 2E+W_c(\Delta^++2\rho)
       +A_\vartheta\ell(\Delta^++\rho)^2 \notag\\
 &+G_U\left\{
 h^2gr^{g+1}+\frac{2\Delta hr}{1-r}
                       +\frac12g\Delta^2
       \right\}.                                               \label{eq:mirror-coarse}
\end{align}
\end{lemma}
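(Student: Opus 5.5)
We write everything in terms of the original sequences $\xi_i,\chi_i$ of $K$ and read off the new ones. Since $K^+(X)=K(X)-X^{-L}K(X)$, we have
\[
\xi^+_i=\xi_i-\xi_{i-L},\qquad
\chi^+_i=\chi_i-\chi_{i-L}.
\]
Because $L$ is even,
\[
K^+(X^{-1})=(1-X^{L})\sigma X^{-2s_K}K(X)=-\sigma X^{-2s_K+L}(1-X^{-L})K(X).
\]
So \eqref{eq:reciprocal-K} holds with $\sigma^+=-\sigma$ and $s_{K^+}=s_K-L/2$, giving $j_*^+=j_*+L=a+(b+L)$. This fixes the new core $[a,b+L]$.

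The amplitude is $\Delta^+=2|\kappa_\zeta K(\zeta)(1-\zeta^{-L})|=\Delta|1-\zeta^L|$. It is positive because $\zeta$ is not a root of unity (Lemma~\ref{lem:roots}). For $i<a$ the left real tail is $\xi_i-\xi_{i-L}$, which has absolute value $h_*r^{a-i}(1-r^L)$. This gives $h_*^+=(1-r^L)h_*$.

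\textbf{Step 1: identify $Y^+$ region by region.} The indices split into five regions:
\begin{itemize}
\item the left exterior $i<a$;
\item the first copy $a\le i\le b$;
\item the gap $b<i<a+L$, of length $g$;
\item the second copy $a+L\le i\le b+L$;
\item the right exterior $i>b+L$.
\end{itemize}
On each region we use the old tail formulas for $Y_i$ and $Y_{i-L}$ together with \eqref{eq:reflection}. This produces an explicit real number congruent to $\xi^+_i$ modulo $\mathbb Z$.
\begin{itemize}
\item On the first copy the candidate is $Y_i-\xi_{i-L}$, a perturbation of size at most $\rho$.
\item On the second copy the candidate is $\sigma\xi_{j_*-i}+\chi_i-Y_{i-L}$. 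By reflecting, this equals $-Y_{i-L}$ plus the oscillation $\chi_i$ plus a real tail $\le\rho$. The combined oscillation relevant to the localized sequence is $\chi^+$-sized, hence at most $\Delta^+$.
\item In the gap the candidate is a sum of two real tail terms plus $\chi_i$. Its size is at most $U=\Delta+2hr$.
\item In the right exterior the candidate is the new reflected tail plus $\chi^+_i$, of size at most $hr+\Delta^+$.
\end{itemize}
The three conditions in \eqref{eq:mirror-conditions} are exactly what keeps each candidate inside $(-1/2,1/2)$, so each candidate equals $Y^+_i$. Reading off the distances to $\pm1/2$ gives the margin \eqref{eq:margin-update}.

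\textbf{Step 2: the action of the two copies.} The two copies contribute $V$ evaluated at old core values shifted by at most $\Delta^++2\rho$ in total (the oscillation on the second copy accounts for $\Delta^+$). We Taylor expand to second order. The linear term is bounded by $W_c$ times the shift. For the quadratic term we use $V''\le 2\mathcal C$ on an interval whose distance to the boundary is at least $\vartheta/2$. This uses $\Delta^++\rho\le\vartheta/2$ and gives the $A_\vartheta\ell(\Delta^++\rho)^2$ term, since $\sec^2(\pi(1/2-\vartheta/2))=\csc^2(\pi\vartheta/2)$. Together with the two copies of the old core-plus-tail action, this yields the $2E$ term. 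The exterior tails of $H^+$ have coefficient $h_*^+\le h_*$, so their action is at most the old tail action and is absorbed into $2E$.

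\textbf{Step 3: the gap, which is the main obstacle.} In the gap we have $H^+_i=\pm(\text{left tail}+\text{right tail})+\chi_i$. We bound $V(u)\le\tfrac12G_U u^2$ for $|u|\le U$ and expand the square into three parts.
\begin{itemize}
\item The product of the two real tails: the sum over the gap of $h^2r^{i-b}r^{a+L-i}$ gives $h^2gr^{g+1}$.
\item The cross terms between the tails and $\chi_i$: these are bounded crudely by $2\Delta h\sum_{j\ge1} r^j$, giving $2\Delta hr/(1-r)$. The squares of the individual real tails already appear in the old action and are absorbed there.
\item The oscillation squares $\sum\chi_i^2$: the pointwise bound $\chi_i^2\le\Delta^2$ gives the $\tfrac12 g\Delta^2$ term in \eqref{eq:mirror-coarse}.
\end{itemize}
For the sharper estimate \eqref{eq:mirror-action}, write $\chi_i^2=\tfrac{\Delta^2}{2}\bigl(1+\operatorname{Re}(e^{i\phi}\zeta^{2i})\bigr)$. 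Summing the geometric series $\sum\zeta^{2i}$ over the gap gives at most $2/|1-\zeta^2|=2/d$, which produces $\Delta^2(g/4+1/(2d))$ up to the factor $G_U$.

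The delicate point is the bookkeeping. We must make sure the squares of the real tails in the gap, and the tails beside each copy, are not double-counted against the $2E$ term. We also have to verify that every cross term has been included exactly once.
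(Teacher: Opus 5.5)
There is a genuine gap on the second copy, and that is exactly where the lemma gains $\Delta^+$ over $\Delta$. Your candidate $\sigma\xi_{j_*-i}+\chi_i-Y_{i-L}$ is congruent to $\xi^+_i$. However, it differs from the translated value $-Y_{i-L}$ by $\chi_i$ plus a tail of size at most $\rho$, and $|\chi_i|$ can be as large as $\Delta$. With this base point:
\begin{enumerate}
\item canonicity requires $\Delta+\rho<\vartheta$, which is not assumed;
\item Taylor's theorem yields $W_c(\Delta+2\rho)$ and $A_\vartheta\ell(\Delta+\rho)^2$, so the benefit of the return $|1-\zeta^L|\ll1$ is lost.
\end{enumerate}
Your sentence that the relevant oscillation is $\chi^+$-sized does not follow from anything you wrote. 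The missing step is to pair $i$ with its mirror index $t=j_*+L-i\in[a,b]$. Then apply \eqref{eq:reflection} to $K^+$, which has sign $-\sigma$ and reflection index $j_*+L$. This gives $\xi^+_i\equiv-\sigma\xi^+_t+\chi^+_i$ and $\chi^+_i=\sigma\chi^+_t$. Since $\xi^+_t\equiv Y_t-\xi_{t-L}$, it follows that
\[
\xi^+_i\equiv\sigma\bigl(-Y_t+\xi_{t-L}+\chi^+_t\bigr)\pmod{\mathbb Z}.
\]
This value is within $\rho+\Delta^+\le\vartheta/2$ of $-\sigma Y_t$, so it equals $Y^+_i$. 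Expanding around $-\sigma Y_t$ and using that $V$ is even gives the stated terms. The two base points genuinely differ: the old congruence $Y_{j_*-t}\equiv\sigma Y_t-\sigma\chi_t$ shows that $-Y_{i-L}$ and $-\sigma Y_t$ differ by $\sigma\chi_t$ modulo $\mathbb Z$, which is of size up to $\Delta$.

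Your gap-region estimate also falls short of the stated bound. Write $x_j,z_j$ for the two real tails and $w$ for the oscillation in the gap. Bounding $V(u)\le\tfrac12G_Uu^2$ and expanding leaves the terms $\tfrac12G_U(x_j^2+z_j^2)$. For $0<|u|<U$ one has $V(u)\le\mathcal C(|u|)u^2<\tfrac12G_Uu^2$, so these terms strictly exceed $V(x_j)+V(z_j)$. They therefore cannot be absorbed into the old tail action inside $2E$. This is a loss in the estimate, not a double-counting issue. Use instead the bound
\[
V(x+z+w)-V(x)-V(z)\le G_U\Bigl(|xz|+(|x|+|z|)|w|+\tfrac12w^2\Bigr)\qquad(|x|+|z|+|w|\le U).
\]
To prove it, write $V(x+z)-V(x)-V(z)=\int_0^x\int_0^zV''(s+t)\,dt\,ds$, then take a Taylor step in $w$ using $|V'(t)|\le G_U|t|$ on $[-U,U]$. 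With this, the baseline $\sum_j(V(x_j)+V(z_j))$ is at most $2\sum_{j\ge1}V(h_*r^j)$. Your three sums then bound exactly the correction terms.
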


\begin{proof}
The reciprocal identity for $K^+$ has sign $-\sigma$, center
parameter $s_K-L/2$, and reflection index $j_*+L$. Its modes satisfy
\begin{equation}\label{eq:mirror-modes}
 \xi_i^+=\xi_i-\xi_{i-L}=(1-r^L)\xi_i,\qquad
 \chi_i^+=\chi_i-\chi_{i-L}.
\end{equation}
In particular, $K^+(\beta)\ne0$, and the new amplitude is
$\Delta^+$ because $|1-\zeta^{-L}|=|1-\zeta^L|$.
It is positive since $\zeta$ is not a root of unity.

On the first core, $a\le i\le b$, the value
$Y_i-\xi_{i-L}$ is congruent to $\xi_i^+$ and differs from $Y_i$
by at most $\rho$. Condition \eqref{eq:mirror-conditions} keeps it in the canonical
interval.
On the second core, write $i=L+j_*-t$, where
$a\le t\le b$. The reflection identity for $K^+$,
whose sign is $-\sigma$, gives
$\chi_i^+=\sigma\chi_t^+$ and
\[
\begin{aligned}
 \xi_i^+
 &\equiv-\sigma\xi_t^++\chi_i^+\\
 &\equiv\sigma(-Y_t+\xi_{t-L}+\chi_t^+)
 \pmod{\mathbb Z}.
\end{aligned}
\]
The last expression differs from $-\sigma Y_t$ by
at most $\rho+\Delta^+\le\vartheta/2$.
It therefore lies in the canonical interval, and
\[
 Y_i^+=\sigma(-Y_t+\xi_{t-L}+\chi_t^+).
\]
The segments joining these values to the old core values lie in
$[-1/2+\vartheta/2,1/2-\vartheta/2]$, where
$V''\le A_\vartheta$.
Taylor's theorem and the evenness of $V$ bound the action of the two
cores by
\begin{equation}\label{eq:mirror-core-action}
 2\sum_{i=a}^bV(Y_i)+W_c(\Delta^++2\rho)
                       +A_\vartheta\ell(\Delta^++\rho)^2.
\end{equation}
For the quadratic term we used
$\tfrac12[\rho^2+(\rho+\Delta^+)^2]\le(\rho+\Delta^+)^2$.

In the gap, write $i=b+j$ with $1\le j\le g$.
The trace congruence identifies the canonical value with
$x_j+z_j+\chi_{b+j}$, where
\[
 x_j=\sigma\xi_{a-j},\qquad z_j=-\xi_{b+j-L},
 \qquad |x_j|=h_*r^j,\quad |z_j|=h_*r^{g+1-j}.
\]
The sum of these absolute values and $|\chi_{b+j}|$ is at most
$U<1/2$. For $|x|+|z|+|w|\le U$, integrating the mixed derivative
for the first two variables and then applying Taylor's theorem in
the third gives
\[
 V(x+z+w)-V(x)-V(z)
 \le G_U\left(|xz|+(|x|+|z|)|w|+\frac12w^2\right).
\]
The integration segments stay in $[-U,U]$, where
$|V'(t)|\le G_U|t|$. Summing the first two terms gives the bounds
\[
 h^2gr^{g+1},\qquad \frac{2\Delta hr}{1-r},
\]
respectively.
Writing $\zeta=e^{i\phi}$ and
$\chi_{b+j}=\Delta\cos(\theta+j\phi)$, the geometric sum gives
\[
 \begin{split}
 \frac12\sum_{j=1}^g\chi_{b+j}^2
 &=\Delta^2\left\{\frac g4+
       \frac14\operatorname{Re}
           \left(e^{2i\theta}\sum_{j=1}^g\zeta^{2j}\right)\right\}\\
 &\le\Delta^2\left(\frac g4+\frac1{2d}\right).
 \end{split}
\]
Alternatively, the same term is at most $g\Delta^2/2$.

The left exterior tail is $(1-r^L)\xi_i$ and is canonical.
Beyond $b+L$, the new reflection relation gives
\[
 Y_i^+=-\sigma\xi^+_{j_*+L-i}+\chi_i^+,
\]
which is canonical by $\Delta^++hr<1/2$.

The two new exterior localized tails have absolute values
$h_*^+r^j$, $j\ge1$. To account for the baseline action, write
\[
 E_{\mathrm{core}}=\sum_{i=a}^b V(Y_i),
 \qquad T=\sum_{j\ge1}V(h_*r^j),
 \qquad E=E_{\mathrm{core}}+2T.
\]
The two cores contribute $2E_{\mathrm{core}}$ before the
Taylor errors in \eqref{eq:mirror-core-action}.
In the gap, the base terms $V(x_j)+V(z_j)$ sum to at most $2T$.
The two new exterior tails contribute at most $2T$, since
$h_*^+\le h_*$. Thus the baseline contribution is at most
\[
 2E_{\mathrm{core}}+2T+2T=2E.
\]
Combining these bounds with \eqref{eq:mirror-core-action} and the
two square-sum estimates proves \eqref{eq:mirror-action} and
\eqref{eq:mirror-coarse}. The bounds in each region give
\eqref{eq:margin-update}.
\end{proof}

\section{Controlled returns and a low-action hierarchy}
\label{sec:hierarchy}

We now choose successive mirrors so that the amplitude decreases
faster than the action grows. The delays must separate the two copies
of each core while keeping the action in the gap under control.

Fix one of the seeds in Lemma~\ref{lem:seed}, including its core.
Let $\ell_{\rm s}$ denote the core length and set $h=h_*$.
Keep these choices fixed, and put
$\omega=\zeta^2$ and $\operatorname{err}(q)=|1-\omega^q|$.
For each positive integer $R$, let $q(R)$ be the least integer
attaining the minimum of $\operatorname{err}(q)$ over $1\le q\le R$.

\begin{lemma}\label{lem:return}
For every positive integer $R$,
\[
 0<\operatorname{err}(q(R))\le\frac{2\pi}{R},
 \qquad q(R)\longrightarrow\infty\quad(R\longrightarrow\infty).
\]
If
$\operatorname{err}(q(R))<\operatorname{err}(q(R'))$, then
$q(R)>R'\ge q(R')$. Moreover, for positive integers $m,q$,
\begin{equation}\label{eq:return-multiple}
 |1-\omega^{mq}|\le m\operatorname{err}(q).
\end{equation}
\end{lemma}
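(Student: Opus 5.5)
The plan is to treat the four assertions separately, using only two facts: $\omega=\zeta^2$ lies on the unit circle, and $\omega$ is not a root of unity (Lemma~\ref{lem:roots}). Positivity is immediate: $\operatorname{err}(q)=|1-\omega^q|>0$ for every $q\ge1$, since otherwise $\omega$ would be a root of unity.

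For the upper bound I will use the Dirichlet pigeonhole principle on the circle. Write $\omega=e^{2\pi i\alpha}$ and consider the $R+1$ points $\omega^0,\omega^1,\dots,\omega^R$. Split the circle into $R$ closed arcs of length $2\pi/R$. Two of the points, say $\omega^j$ and $\omega^k$ with $0\le j<k\le R$, lie in the same arc. Their arc distance is at most $2\pi/R$, so
\[
|1-\omega^{k-j}|=|\omega^k-\omega^j|\le\frac{2\pi}{R},
\]
because chord length is at most arc length. Since $1\le k-j\le R$, the minimum defining $\operatorname{err}(q(R))$ is at most $2\pi/R$.

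For $q(R)\to\infty$, fix $Q$ and set $\delta_Q=\min_{1\le q\le Q}\operatorname{err}(q)$. This is positive by the first step. Whenever $2\pi/R<\delta_Q$, the minimizer cannot lie in $[1,Q]$, so $q(R)>Q$.

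For the monotonicity claim, suppose $\operatorname{err}(q(R))<\operatorname{err}(q(R'))$. If $q(R)\le R'$, then $q(R)$ is admissible in the minimization over $[1,R']$, which would give $\operatorname{err}(q(R'))\le\operatorname{err}(q(R))$, a contradiction. Hence $q(R)>R'$. The inequality $R'\ge q(R')$ holds by definition.

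For \eqref{eq:return-multiple} I will telescope. From
\[
1-\omega^{mq}=\sum_{k=0}^{m-1}\omega^{kq}(1-\omega^q)
\]
and $|\omega|=1$, the triangle inequality gives $|1-\omega^{mq}|\le m\operatorname{err}(q)$.

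None of these steps is a real obstacle. The only point needing care is the pigeonhole arc bookkeeping: using $R$ arcs for $R+1$ points, and noting that chord length is at most arc length.
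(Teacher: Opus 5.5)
Your proposal is correct and follows the paper's proof essentially step by step. You phrase Dirichlet's pigeonhole argument with arcs on the circle and the bound chord $\le$ arc, where the paper uses fractional parts of $q\alpha$ in $[0,1)$ and the estimate $|1-e^{2\pi i x}|\le 2\pi|x|$. The remaining steps coincide with the paper's: positivity of every error (as $\omega$ is not a root of unity) forces $q(R)\to\infty$, minimality gives $q(R)>R'$, and the geometric-sum identity gives \eqref{eq:return-multiple}.
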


\begin{proof}
Write $\omega=e^{2\pi i\alpha}$ with $\alpha$ irrational.
Divide $[0,1)$ into $R$ equal half-open intervals. Among the
$R+1$ fractional parts of $0,\alpha,\ldots,R\alpha$, two must
lie in the same interval. Taking the difference of their indices,
we obtain integers $1\le q\le R$ and $n$ such that
$|q\alpha-n|\le1/R$. Since $q(R)$ minimizes the error over
$1\le q\le R$, it follows that
\[
 \operatorname{err}(q(R))
 \le\operatorname{err}(q)
 =|1-e^{2\pi i(q\alpha-n)}|
 \le2\pi|q\alpha-n|
 \le\frac{2\pi}{R}.
\]
Every error is positive because $\alpha$ is irrational.
In particular, the minimum error over any fixed finite set of
positive integers is strictly positive. Since
$\operatorname{err}(q(R))\to0$, we must have $q(R)\to\infty$.

Now suppose that
$\operatorname{err}(q(R))<\operatorname{err}(q(R'))$.
By the definition of $q(R')$, no integer $q\le R'$ can have
an error smaller than $\operatorname{err}(q(R'))$.
Hence $q(R)>R'\ge q(R')$.

Finally, the geometric-sum identity gives
\[
 1-\omega^{mq}
 =(1-\omega^q)\sum_{j=0}^{m-1}\omega^{jq}.
\]
Since $|\omega|=1$, taking absolute values yields
\[
 |1-\omega^{mq}|
 \le m|1-\omega^q|
 =m\operatorname{err}(q),
\]
which proves \eqref{eq:return-multiple}.
\end{proof}

Set
\[
 \eta=\min\{\vartheta_{\rm s}/8,1/100\}>0.
\]
We first make one mirror to obtain a sufficiently small amplitude.
Choose $R_0$ large, put $q_0=q(R_0)$ and $L_0=2q_0$, and require
$q_0>\ell_{\rm s}$. By Lemma~\ref{lem:return}, we may also require
\begin{align*}
 \Delta_0:=\Delta_{\rm s}\operatorname{err}(q_0)
     &<\min\{e^{-32},\eta/4\},                                  \\
 \Delta_0+hr^{2q_0-\ell_{\rm s}+1}
     &\le\vartheta_{\rm s}/2,                                  \\
 \frac{hr^{\ell_0}}{1-r^{\ell_0}}
     &<\eta/4,\qquad
 \ell_0=\ell_{\rm s}+2q_0<3q_0<4q_0.                            
\end{align*}
These conditions hold simultaneously for large $R_0$, since
$q_0\to\infty$ and its error tends to zero. Moreover,
\[
 U_{\rm s}=\Delta_{\rm s}+2hr<\frac{27}{100},
 \qquad \Delta_0+hr<\frac12.
\]
Thus Lemma~\ref{lem:mirror} applies.
The new boundary distance is at least $4\eta$. On the two core
copies the distance is at least $\vartheta_{\rm s}/2\ge4\eta$;
in the gap and exterior real tails it exceeds $23/100$ and
$49/100$, respectively.
Fix this prepared orbit and index it by $k=0$.
Its action $E_0$ is finite, and its actual real-tail coefficient
is at most $h$.

\begin{proposition}\label{prop:hierarchy}
Starting from the prepared orbit, there is a sequence of reciprocal
localized orbits with a common left endpoint, real-tail coefficients
at most $h$, and global boundary distances at least $3\eta$.
For a finite constant $C_E$ independent of $k$,
\begin{equation}\label{eq:hierarchy-goal}
 0<\Delta_k<e^{-32\cdot2^k},\qquad
 x_k:=\log(1/\Delta_k)\ge26(5/2)^k,\qquad
 E_k\le C_E2^k .
\end{equation}
In particular, $E_k=o(x_k)$.
\end{proposition}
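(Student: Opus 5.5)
We build the sequence inductively by repeated application of Lemma~\ref{lem:mirror}, starting from the prepared orbit ($k=0$). Given the orbit at stage $k$ with core length $\ell_k$, amplitude $\Delta_k$, action $E_k$ and boundary distance at least $3\eta+\eta 2^{-k}$ (the margin decreases along a summable sequence and never drops below $3\eta$), we choose the next delay $L_{k+1}=2q_{k+1}m_{k+1}$ from the returns of Lemma~\ref{lem:return}. The target is
\[
 \operatorname{err}\approx\Delta_k^{3/2},
 \qquad\text{so that}\qquad
 \Delta_{k+1}=\Delta_k|1-\omega^{L_{k+1}/2}|\le\Delta_k^{5/2}.
\]
This gives $x_{k+1}\ge\tfrac52x_k$. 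Since $x_0\ge32$, we get $x_k\ge32(5/2)^k\ge26(5/2)^k$ and $\Delta_k<e^{-32\cdot 2^k}$. The mirror $K^+=(1-X^{-L})K$ keeps the left endpoint $a$ fixed and multiplies the real-tail coefficient by $(1-r^L)\le1$, so the common left endpoint and the bound $h_*\le h$ are automatic.

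Concretely, pick $R$ with $2\pi/R\le\Delta_k^{3/2}$, i.e.\ $R\asymp\Delta_k^{-3/2}$, and let $q=q(R)$. Dirichlet gives $\operatorname{err}(q)\le\Delta_k^{3/2}$ with $q\le R$. We also need $q$ large compared with $\ell_k$, so that $g=L-\ell_k$ is comparable to $L$ and $\rho=hr^{g+1}$ is tiny, say $\rho\le\Delta_k^{3}$. If $q(R)$ is too small, we replace it by a multiple $mq$ and use \eqref{eq:return-multiple}. Budgeting $m\operatorname{err}(q)\le\Delta_k^{3/2}$ then costs at most a constant factor after adjusting $R$. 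This keeps $L_{k+1}\lesssim\Delta_k^{-3/2}\cdot\mathrm{poly}(x_k)$ and $\ell_{k+1}=\ell_k+L_{k+1}$. Inductively, $\ell_k\le\Delta_{k-1}^{-2}$, which is much smaller than $\Delta_k^{-1}$ because $\Delta_k\le\Delta_{k-1}^{5/2}$.

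The conditions \eqref{eq:mirror-conditions} hold because $\Delta^+$, $\rho$ and $hr$ are tiny relative to $\vartheta\ge3\eta$, and $U\le\Delta_k+2hr<1/2$. By \eqref{eq:margin-update}, the margin loss at step $k$ is $\Delta_{k+1}+\rho$, which is summable and far below $\eta2^{-k-1}$.

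For the action we use the coarse bound \eqref{eq:mirror-coarse}. Since $G_U$ and $A_\vartheta$ are uniformly bounded (as $\vartheta\ge3\eta$ and $U<0.3$), we get
\[
 E_{k+1}\le 2E_k+W_c(\Delta_{k+1}+2\rho)+C\ell_k(\Delta_{k+1}+\rho)^2+C\bigl(gr^{g}+\Delta_k+g\Delta_k^2\bigr).
\]
The terms are controlled as follows.
\begin{itemize}
\item By \eqref{eq:core-derivative-bound}, $W_c\le C\sqrt{\ell_kE_k}$, which multiplied by $\Delta_{k+1}$ is negligible.
\item $\ell_k\Delta_{k+1}^2$ is negligible.
\item $gr^g$ is negligible.
\item The crucial gap term is $g\Delta_k^2\lesssim\Delta_k^{-3/2}\mathrm{poly}(x_k)\,\Delta_k^2=\Delta_k^{1/2}\mathrm{poly}(x_k)$, which is tiny.
\end{itemize}
Hence $E_{k+1}\le2E_k+\epsilon_k$ with $\sum_k\epsilon_k2^{-k}<\infty$ (indeed $\epsilon_k\le Ce^{-x_k/3}$). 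Iterating gives $E_k\le2^k\bigl(E_0+\sum_j\epsilon_j2^{-j-1}\bigr)=:C_E2^k$. Then $E_k=o(x_k)$ because $2^k=o((5/2)^k)$.

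The main obstacle is the bookkeeping that makes the return delay $L_{k+1}$ simultaneously large enough ($\gg\ell_k$, so that $\rho$ is negligible and the cores separate) and small enough ($g\Delta_k^2\to0$ fast), while achieving $\operatorname{err}\le\Delta_k^{3/2}$. The upper bound $q(R)\le R$ from Dirichlet handles smallness. Largeness is handled either by $q(R)\to\infty$ or by the multiple trick, and we must verify the inductive growth $\ell_k\le\Delta_{k-1}^{-2}$ so that the lower requirement $q\gg\ell_k$ is compatible with $R\asymp\Delta_k^{-3/2}$. I would first try fixing the exponent $3/2$ and checking that the induction closes with room to spare in the bound $\ell_k\ll\Delta_k^{-1/2}$.
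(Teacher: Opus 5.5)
Your architecture matches the paper's: mirrors with even delays $L=2mq$, a return error of order $\Delta^{3/2}$ so that the amplitude exponent grows by a factor $5/2$, and the coarse bound \eqref{eq:mirror-coarse} giving $E_{k+1}\le 2E_k+O(1)$. Your treatment of the action terms and the margins is fine \emph{once} $L_{k+1}>2\ell_k$ is known. That separation is exactly the step you flag as the main obstacle, and the proposal does not prove it.

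Neither fact you invoke gives a lower bound on the return denominator. The statement $q(R)\to\infty$ is qualitative and says nothing about $q(R)$ versus $\ell_k$ at your particular $R\asymp\Delta_k^{-3/2}$. The bounds $\ell_k\le\Delta_{k-1}^{-2}\ll R$ and $q(R)\le R$ only show there is room, not that $q(R)\gg\ell_k$. The optimal return time up to $R$ can lie far below $R$ when $\arg(\zeta^2)/2\pi$ has a large partial quotient, and no Diophantine information is available. Even the monotonicity $q_{k+1}>q_k$, which you also do not establish, would not suffice with $m=1$, since $\ell_k=\ell_{k-1}+2q_k$.

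The multiple trick is circular as you state it. The needed multiplier $m\approx\ell_k/q(R)$ is unbounded without a lower bound on $q(R)$, and enlarging $R$ to absorb $m$ changes $q(R)$ and hence $m$. So the claims that this ``costs at most a constant factor'' and that $L_{k+1}\le\Delta_k^{-3/2}\mathrm{poly}(x_k)$ are unproved. With them go $g>\ell_k$, the smallness of $\rho$, and the summable margin loss.

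The paper closes this by fixing the multiplier in advance and comparing errors. It takes $m_k=4^k$ and $R_k=\lfloor\Delta_{k-1}^{-3/2}/(2m_k)\rfloor$.
\begin{itemize}
\item From $\Delta_{k-1}\le m_{k-1}\Delta_{k-2}\operatorname{err}(q_{k-1})$ one gets $\operatorname{err}(q_{k-1})>\Delta_{k-1}/m_{k-1}$.
\item From $32m_k^2\Delta_{k-1}^{1/2}<1$ one gets $\operatorname{err}(q_k)<32m_k\Delta_{k-1}^{3/2}<\Delta_{k-1}/m_{k-1}$.
\item The second assertion of Lemma~\ref{lem:return} then forces $q_k>R_{k-1}\ge q_{k-1}$.
\item Carrying the invariant $\ell_{k-1}<m_kq_{k-1}$ yields $L_k=2m_kq_k>2\ell_{k-1}$ and $\ell_k<3m_kq_k<m_{k+1}q_k$, which closes the induction.
\end{itemize}
The price is $\Delta_k<32m_k^2\Delta_{k-1}^{5/2}$, that is, $x_k\ge\frac52x_{k-1}-\log32-k\log16$. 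This is why the constant is $26$ rather than your $32$.

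Your suggested target $\ell_k\ll\Delta_k^{-1/2}$ is not needed, and it can fail. For returns of Dirichlet quality, $\ell_k$ is of order $\Delta_{k-1}^{-3/2}$ while $\Delta_k^{-1/2}$ is of order $\Delta_{k-1}^{-5/4}$, up to powers of $m_k$.
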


\begin{proof}
We construct the orbits inductively. Set $m_0=1$ and
$\Delta_{-1}=\Delta_{\rm s}$, so that the preparation step satisfies
$\Delta_0=\Delta_{-1}|1-\omega^{m_0q_0}|$.
Suppose that the orbit at level $k-1$ has been constructed.
For $k\ge1$, choose
\[
 m_k=4^k,\qquad
 R_k=\left\lfloor\frac{\Delta_{k-1}^{-3/2}}{2m_k}\right\rfloor,
 \qquad q_k=q(R_k),\qquad L_k=2m_kq_k.
\]
The intended new amplitude and core length are
\[
 \Delta_k=\Delta_{k-1}|1-\omega^{m_kq_k}|,
 \qquad \ell_k=\ell_{k-1}+L_k.
\]
The exponent $3/2$ in the definition of $R_k$ balances the gap cost against the decrease in amplitude. Writing $\delta=\Delta_{k-1}$, the estimates below give
\[
L_k\delta^2\le\delta^{1/2},
\qquad
\Delta_k<32m_k^2\delta^{5/2}.
\]
The first bound controls the quadratic gap cost. The second leads to growth of $\log(1/\Delta_k)$ on the $(5/2)^k$ scale, faster than the $2^k$ bound for the action. The choice $m_k=4^k$, together with the increase of the return denominators proved below, ensures the required separation of successive cores.

We first establish estimates that will allow us to apply
Lemma~\ref{lem:mirror}.

Let $\vartheta_j$ denote the global boundary distance at level $j$,
and write
\[
 \rho_j=hr^{L_j-\ell_{j-1}+1},\qquad
 S_j=\sum_{i=1}^j(\Delta_i+\rho_i),\qquad S_0=0.
\]
We maintain the inductive bounds
\begin{equation}\label{eq:hierarchy-invariants}
 \Delta_{k-1}\le\Delta_0^{2^{k-1}}<e^{-32\cdot2^{k-1}},
 \qquad \ell_{k-1}<4^kq_{k-1},
 \qquad \vartheta_{k-1}\ge4\eta-S_{k-1}.
\end{equation}
They hold for $k=1$ by the preparation step.

Put $\delta=\Delta_{k-1}$ and $m=m_k$.
The amplitude bound in \eqref{eq:hierarchy-invariants} gives
\[
 \log(32m^2\sqrt{\delta})
 <\log32+k\log16-16\cdot2^{k-1}
 <4+3k-16k<0,
\]
where we used $2^{k-1}\ge k$ for $k\ge1$. Thus
\begin{equation}\label{eq:key-small}
 32m^2\sqrt{\delta}<1.
\end{equation}
In particular, $\delta^{-3/2}/(2m)>2$, and hence
\[
 R_k\ge\frac{\delta^{-3/2}}{4m}\ge1.
\]
Lemma~\ref{lem:return} now gives
\begin{equation}\label{eq:controlled-return}
 \operatorname{err}(q_k)
 \le\frac{2\pi}{R_k}
 \le8\pi m\delta^{3/2}
 <32m\delta^{3/2},
 \qquad L_k\le\delta^{-3/2},
\end{equation}
where the length bound follows from $q_k\le R_k$.

We next show that the new return denominator exceeds the preceding
one. By the amplitude relation at level $k-1$, including the
preparation step when $k=1$, and \eqref{eq:return-multiple},
\[
 \delta
 \le m_{k-1}\Delta_{k-2}
                  \operatorname{err}(q_{k-1}).
\]
Since $\Delta_{k-2}<1$, this implies
$\operatorname{err}(q_{k-1})>\delta/m_{k-1}$.
On the other hand, \eqref{eq:key-small} and
\eqref{eq:controlled-return} give
\[
 \operatorname{err}(q_k)
 <32m\delta^{3/2}
 <\frac{\delta}{m_{k-1}}
 <\operatorname{err}(q_{k-1}).
\]
By Lemma~\ref{lem:return}, we have $q_k>q_{k-1}$.
Combining this with
$\ell_{k-1}<4^kq_{k-1}=m q_{k-1}$, we obtain
\[
 L_k>2\ell_{k-1},\qquad
 \ell_k<3m_kq_k<4^{k+1}q_k,\qquad
 \ell_k>3\ell_{k-1}.
\]
In particular, the delay $L_k$ is large enough to separate the
two copies of the preceding core.

We also obtain a stronger bound for the new amplitude.
By \eqref{eq:return-multiple} and \eqref{eq:controlled-return},
\[
 0<\Delta_k
 =\delta|1-\omega^{mq_k}|
 \le m\delta\operatorname{err}(q_k)
 <32m^2\delta^{5/2}
 <\delta^2.
\]
The positivity follows because $\omega$ is not a root of unity,
and the last inequality follows from \eqref{eq:key-small}.
Consequently,
\[
 \Delta_k<\Delta_0^{2^k}<e^{-32\cdot2^k},
\]
which establishes the amplitude bound at the next level.

The sharper amplitude estimate also gives
\[
 x_k\ge\frac52x_{k-1}-\log32-k\log16.
\]
Dividing by $(5/2)^k$ and summing, we find
\[
 \begin{aligned}
 \frac{x_k}{(5/2)^k}
 &\ge x_0-
       \sum_{j=1}^k\frac{\log32+j\log16}{(5/2)^j}\\
 &>x_0-4\cdot\frac23-3\cdot\frac{10}{9}
   =x_0-6>26,
 \end{aligned}
\]
where we used $x_0>32$. This proves the second bound in
\eqref{eq:hierarchy-goal}.

It remains to verify the boundary conditions before applying the
mirror. Put $g_k=L_k-\ell_{k-1}$, so that
$g_k>\ell_{k-1}$ and $\rho_k=hr^{g_k+1}$.
Since $\ell_j>3\ell_{j-1}$, we have
$g_j+1\ge3^{j-1}\ell_0$ for $1\le j\le k$.
Therefore, 
\[
 \sum_{j=1}^k\rho_j
 \le h\sum_{j=1}^k r^{3^{j-1}\ell_0}
 \le\frac{hr^{\ell_0}}{1-r^{\ell_0}}
 <\frac{\eta}{4},
\]
where the last inequality was arranged in the preparation step.
Similarly, the amplitude bounds and $\Delta_0<1/2$ give
\[
 \sum_{j=1}^k\Delta_j
 \le\sum_{j=1}^\infty \Delta_0^{2^j}
 \le\frac{\Delta_0^2}{1-\Delta_0^2}
 <\Delta_0<\frac{\eta}{4}.
\]
Thus, $S_k<\eta/2$.

By the inductive boundary bound,
\[
 \vartheta_{k-1}\ge4\eta-S_{k-1}>\frac{7\eta}{2},
 \qquad
 \Delta_k+\rho_k<\frac{\eta}{2}
 <\frac{\vartheta_{k-1}}2.
\]
Moreover, since $\delta<e^{-32}$ and $h<1/100$,
\[
 U_k=\delta+2hr<\frac3{100},
 \qquad
 \Delta_k+hr<\frac2{100}.
\]
Together with $L_k>2\ell_{k-1}$, these inequalities verify
all the hypotheses of Lemma~\ref{lem:mirror}.
We may therefore form the orbit at level $k$.

The mirror preserves the left endpoint and does not increase the
actual real-tail coefficient, which remains at most $h$.
By \eqref{eq:margin-update}, the new boundary distance satisfies
\[
 \vartheta_k\ge
 \min\left\{
 4\eta-S_k,\,
 \frac12-U_k,\,
 \frac12-hr
 \right\}.
\]
The last two quantities exceed $47/100$ and $49/100$,
respectively. Since $\eta\le1/100$, it follows that
\[
 \vartheta_k\ge4\eta-S_k
 >\frac{7\eta}{2}>3\eta.
\]
This establishes the boundary bound and completes the inductive
construction.

We now estimate the action. Define the fixed finite constants
\[
 \begin{aligned}
 S_r&=\frac{r}{(1-r)^2},
 & B_0&=2\pi\cot(\pi\eta),\\
 A_0&=2\pi^2\csc^2(\pi\eta),
 & G_0&=2\pi^2\sec^2(3\pi/100).
 \end{aligned}
\]
At level $k-1$, the boundary distance is greater than $3\eta$.
Thus, in the notation of Lemma~\ref{lem:mirror},
\[
 W_c\le B_0\ell,\qquad
 A_\vartheta\le A_0,\qquad G_U\le G_0,
\]
where $\ell=\ell_{k-1}$ and $U=U_k$.
Here, the first estimate follows from
$|V'(t)|=2\pi|\tan(\pi t)|$ on the old core.

Write $g=g_k$. Since
$\ell<L_k/2\le\delta^{-3/2}/2$ and
$\Delta_k<\delta^2$, we have
\[
 \ell\Delta_k\le\frac12\sqrt\delta<1,\qquad
 \ell\Delta_k^2<1,\qquad
 g\delta^2\le\sqrt\delta<1.
\]
Also, $g>\ell$ and $\rho_k=hr^{g+1}$ imply
\[
 \ell\rho_k\le hS_r,\qquad
 \ell\rho_k^2\le h^2S_r,\qquad
 gr^{g+1}\le S_r,
\]
using $S_r=\sum_{n=1}^\infty nr^n$.

Substituting these estimates into \eqref{eq:mirror-coarse},
and using $(u+v)^2\le2(u^2+v^2)$, gives
\[
 E_k\le2E_{k-1}+C_*,
\]
where
\[
 C_*=
 B_0(1+2hS_r)+2A_0(1+h^2S_r)
 +G_0\left(
 h^2S_r+\frac{2hr}{1-r}+\frac12
 \right).
\]
All the quantities defining $C_*$ are fixed, so $C_*$
is finite and independent of $k$. Iterating the inequality yields
\[
 E_k\le2^kE_0+(2^k-1)C_*
 \le(E_0+C_*)2^k.
\]
Taking $C_E=E_0+C_*$ proves the remaining bound in
\eqref{eq:hierarchy-goal}. Finally,
\[
 0\le\frac{E_k}{x_k}
 \le\frac{C_E}{26}\left(\frac45\right)^k
 \longrightarrow0,
\]
and hence $E_k=o(x_k)$.
\end{proof}

We next show that the expanding coefficients remain bounded away
from zero, as needed for the Fourier frequencies in the next section.
Every mirror preserves the left endpoint $a$. We shift the indices
by writing $i=a+j$ and set $c_k=\xi_a^{(k)}$, with
$c_0=\xi_a^{(0)}$ for the prepared orbit.
By \eqref{eq:mirror-modes}, each mirror multiplies the expanding
coefficient by $1-r^{L_j}$. Hence,
\begin{equation}\label{eq:c-product}
 c_k=c_0\prod_{j=1}^k(1-r^{L_j}),\qquad
 0<|c_0|\le h<1/100.
\end{equation}
The delays are strictly increasing, since
$L_j>2\ell_{j-1}$ and $\ell_{j-1}>L_{j-1}$.
As they are distinct positive integers,
\[
 \sum_{j=1}^\infty r^{L_j}
 \le\sum_{n=1}^\infty r^n=\frac{r}{1-r}.
\]
Moreover, for $0\le t\le r$,
\[
 -\log(1-t)=\int_0^t\frac{du}{1-u}
 \le\frac{t}{1-r}.
\]
Applying this to the product in \eqref{eq:c-product} gives
\[
 \log\frac{|c_0|}{|c_k|}
 =\sum_{j=1}^k-\log(1-r^{L_j})
 \le\frac{1}{1-r}\sum_{j=1}^k r^{L_j}
 \le\frac{r}{(1-r)^2}.
\]
Consequently,
\begin{equation}\label{eq:c-bound}
 0<c_*:=|c_0|\exp\left(-\frac{r}{(1-r)^2}\right)
 \le|c_k|\le h<1/100.
\end{equation}

After shifting the common left endpoint to zero, we retain the
notation $Y_j^{(k)},H_j^{(k)},\chi_j^{(k)}$ for the orbit, its
localization, and its oscillatory term. The core is now
$[0,\ell_k-1]$. Since $\xi_{a+j}^{(k)}=c_k\beta^j$,
the canonical definition gives
\begin{equation}\label{eq:global-congruence}
 Y_j^{(k)}=\operatorname{can}(c_k\beta^j)
          \equiv c_k\beta^j\pmod{\mathbb Z}
 \qquad(j\in\mathbb Z).
\end{equation}
The localization identities take the form
\begin{equation}\label{eq:localized-structure}
 \begin{gathered}
 Y_j^{(k)}
   =H_j^{(k)}+\mathbf1_{\{j\ge\ell_k\}}\chi_j^{(k)},\qquad
 |\chi_j^{(k)}|\le\Delta_k,\\
 |H_j^{(k)}|\le hr^{j-\ell_k+1}\quad(j\ge\ell_k),\qquad
 E_k=\sum_{j\in\mathbb Z}V(H_j^{(k)}).
 \end{gathered}
\end{equation}
In particular, the real right tail is bounded by $hr$ at its
first index $j=\ell_k$ and decays geometrically thereafter.
Finally, the uniform lower bound $|c_k|\ge c_*>0$ implies
\[
 \min\{s\in\mathbb Z_{\ge0}:|c_k|\beta^s\ge1\}
 \le\left\lceil\frac{\log(1/c_*)}{\log\beta}\right\rceil.
\]
Thus these integers are bounded independently of $k$.

\section{Localized trajectories and the entropy deficit}
\label{sec:fourier-closure}

We now restore the outgoing oscillation and apply the entropy estimate
from Section~\ref{sec:entropy}. The argument uses the full localized
action, a uniform bound for the real tail, and an expanding left
coefficient bounded away from zero. These are also the quantities
provided by the finite constructions in Appendix~\ref{app:explicit}.

For the centered trajectories constructed above, the required
congruence already follows from \eqref{eq:global-congruence}.
The following lemma gives an alternative criterion based on an
integer recurrence and an exact left tail. It also applies to
the Green-sequence construction in Appendix~\ref{app:p2}.

\begin{lemma}
\label{lem:recurrence-congruence}
Let $P$ be the monic integral quartic polynomial of $\beta$, and let
$(Sy)_j=y_{j+1}$. Suppose that a real bilateral sequence $y$ satisfies
$P(S)y\in\mathbb Z^{\mathbb Z}$
and $y_j=c\beta^j$ for every $j<0$, where $c\ne0$. Then
\begin{equation}\label{eq:recurrence-congruence}
 y_j\equiv c\beta^j\pmod{\mathbb Z}\qquad(j\in\mathbb Z).
\end{equation}
\end{lemma}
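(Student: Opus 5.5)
Consider the difference sequence $z_j=y_j-c\beta^j$ for $j\in\mathbb Z$. The plan is to show that $z$ is integer-valued. First, $P(S)$ annihilates the geometric sequence $(c\beta^j)_j$, because $(P(S)(c\beta^{\cdot}))_j=c\beta^jP(\beta)=0$. Hence $P(S)z=P(S)y\in\mathbb Z^{\mathbb Z}$. Second, by hypothesis $z_j=0$ for every $j<0$. Writing $P(X)=X^4+p_3X^3+p_2X^2+p_1X+p_0$ with $p_i\in\mathbb Z$, the relation $P(S)z=w\in\mathbb Z^{\mathbb Z}$ reads
\[
 z_{j+4}=w_j-p_3z_{j+3}-p_2z_{j+2}-p_1z_{j+1}-p_0z_j .
\]

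We then argue by forward induction on $n=j+4$. The base cases are $z_{-4},z_{-3},z_{-2},z_{-1}$, all of which vanish and so are integers. If $z_{n-4},\dots,z_{n-1}$ are integers, the displayed recurrence expresses $z_n$ as an integer combination of integers, because $P$ is monic and integral. Hence $z_n\in\mathbb Z$ for every $n\ge0$, while $z_n=0$ for $n<0$. This gives $y_j-c\beta^j\in\mathbb Z$ for all $j$, which is \eqref{eq:recurrence-congruence}.

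This argument uses only monicity and never needs the constant coefficient to be $\pm1$, since we only propagate to the right from a left tail on which $z$ vanishes identically. The only step needing any care is checking that the geometric sequence lies in the kernel of $P(S)$, and that holds because $P(\beta)=0$. The hypothesis $c\ne0$ plays no role in the congruence itself; it matters only for later use of the lemma.
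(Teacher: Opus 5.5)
Your proof is correct and is essentially the paper's argument: set $z_j=y_j-c\beta^j$, use $P(\beta)=0$ to get $P(S)z=P(S)y\in\mathbb Z^{\mathbb Z}$, and run the monic integer recurrence forward from the vanishing left tail $z_{-4}=\dots=z_{-1}=0$. Your side remarks are also accurate: only monicity is used, and $c\ne0$ plays no role in the congruence.
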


\begin{proof}
Write $P(X)=X^4+\sum_{i=0}^3p_iX^i$, with $p_i\in\mathbb Z$, and set
$z_j=y_j-c\beta^j$. The identity $P(\beta)=0$ gives
\[
 z_{j+4}=(P(S)y)_j-\sum_{i=0}^3p_i z_{j+i}.
\]
Since $z_{-4}=z_{-3}=z_{-2}=z_{-1}=0$, we may start at $j=-4$.
Induction gives $z_j\in\mathbb Z$ for every subsequent index.
\end{proof}

The congruence \eqref{eq:recurrence-congruence} identifies the Fourier
coefficients that we will use. For every integer $n$, the Bernoulli
product and the periodicity of $V$ give
\begin{equation}\label{eq:fourier-product}
 |\widehat\nu(\pi|c|\beta^n)|^2
 =\prod_{j\le n}\cos^2(\pi c\beta^j)
 =\exp\!\left(-\sum_{j\le n}V(y_j)\right).
\end{equation}
Here, $V(u)=-\log\cos^2(\pi u)$ is understood as an extended nonnegative
function. For each such $n$, the infinite left tail converges because
$c\beta^j\to0$ geometrically as $j\to-\infty$.
For the centered trajectories constructed above, the required congruence
is \eqref{eq:global-congruence}.

The next estimate controls the cost of restoring the outgoing wave over
a finite range of indices.

\begin{lemma}\label{lem:fourier-restoration}
Suppose that $\ell$ is an integer with $\ell\ge1$,
$0<h<1/100$, $0\le\Delta<1/100$, and
\[
 y_j=H_j+\mathbf1_{\{j\ge\ell\}}\chi_j,
 \qquad |\chi_j|\le\Delta\quad(j\ge\ell),
\]
\[
 E=\sum_{j\in\mathbb Z}V(H_j)<\infty,
 \qquad |H_j|\le hr^{j-\ell+1}\quad(j\ge\ell).
\]
Put $C_0=2\pi^2$ and $A=2C_0hr/(1-r)$. Then, for every positive integer
$J$ and every $0\le n\le J$,
\begin{equation}\label{eq:prefix-action}
 \sum_{j\le n}V(y_j)\le
 B_J:=E+A\Delta+C_0(J+1)\Delta^2.
\end{equation}
\end{lemma}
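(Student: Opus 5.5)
Split the prefix sum at the index $\ell$. For $j<\ell$ we have $y_j=H_j$, so those terms contribute $\sum_{j<\ell,\,j\le n}V(H_j)$. If $n<\ell$ this is already at most $E$, since $V\ge0$, and there is nothing more to prove.

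When $n\ge\ell$, the key is a term-by-term comparison on $\ell\le j\le n$:
\[
 V(H_j+\chi_j)\le V(H_j)+|V'(H_j)|\,|\chi_j|+\tfrac12\sup V''\cdot\chi_j^2 .
\]
Here $|H_j|\le hr<1/100$ and $|\chi_j|\le\Delta<1/100$, so every point on the segment from $H_j$ to $H_j+\chi_j$ lies in $[-1/50,1/50]$. On that interval $V''(u)=2\pi^2\sec^2(\pi u)$. I would bound this by $2C_0=4\pi^2$, which is generous, so the quadratic term is at most $C_0\chi_j^2\le C_0\Delta^2$. For the linear term, use $|V'(u)|=2\pi|\tan(\pi u)|$. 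On $|u|\le1/100$ one has $2\pi|\tan(\pi u)|\le 2\pi^2|u|\sec^2(\pi/100)$, or simply $|V'(u)|\le\sup|V''|\,|u|$ by the mean value theorem with $V'(0)=0$. Either way, $|V'(H_j)|\le 2C_0|H_j|$ up to the harmless secant factor, which is at most $2$. With $|H_j|\le hr^{j-\ell+1}$ this gives
\[
 \sum_{j\ge\ell}|V'(H_j)|\,\Delta\le 2C_0\Delta h\sum_{m\ge1}r^m=\frac{2C_0hr}{1-r}\,\Delta=A\Delta .
\]

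Sum the three contributions over $\ell\le j\le n$. The base terms $V(H_j)$, together with those for $j<\ell$, are bounded by $E$, again because $V\ge0$ lets us pass from partial sums to the full action. The linear terms contribute at most $A\Delta$. The quadratic terms contribute at most $C_0\Delta^2$ times the number of indices in $[\ell,n]$. That number is at most $n-\ell+1\le n\le J<J+1$, since $\ell\ge1$ and $n\le J$. This yields $B_J$.

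The only point needing care is the constant bookkeeping. One must check that $\sec^2(\pi/50)\le2$, so that the sharper curvature bound $\pi^2\sec^2$ times $\chi^2$ is dominated by $C_0\Delta^2$, and that the derivative bound $|V'(H_j)|\le 2C_0|H_j|$ indeed holds on $|u|\le1/100$. Both are immediate since $\sec^2(\pi/50)<1.01$.
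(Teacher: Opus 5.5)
Your proposal is correct and follows essentially the same route as the paper: split the prefix at $\ell$, apply a second-order Taylor bound for $\ell\le j\le n$ using $V''\le 2C_0$ and $|V'(u)|\le 2C_0|u|$ near the origin, and sum the mixed terms geometrically to obtain $A\Delta$. The only cosmetic difference is that the paper works on $[-1/4,1/4]$, where $\sec^2(\pi u)\le 2$ gives the constants directly, whereas you use the smaller interval $[-1/50,1/50]$ and bound the number of quadratic terms by $J$ rather than $J+1$.
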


\begin{proof}
On $[-1/4,1/4]$, one has $V''\le2C_0$ and
$|V'(u)|\le2C_0|u|$. For $j\ge\ell$,
\[
 |H_j|+|\chi_j|\le hr+\Delta<2/100<1/4.
\]
Taylor's theorem on this interval yields
\[
 V(H_j+\chi_j)
 \le V(H_j)+2C_0|H_j|\Delta+C_0\Delta^2.
\]
For $n\ge\ell$, sum over $\ell\le j\le n$. The mixed terms contribute
at most $A\Delta$, and there are at most $J+1$ quadratic terms.
Adding the unchanged coordinates and using
$\sum_{j\le n}V(H_j)\le E$ proves \eqref{eq:prefix-action}.
For $n<\ell$, the prefix is unchanged and its action is at most $E$.
\end{proof}

No comparison between $J$ and $\ell$ is needed: when $n<\ell$,
the prefix is unchanged, and when $n\ge\ell$, the restoration
estimate applies. 
We can now choose a trajectory and a finite family of Fourier
frequencies for which the entropy gain exceeds the grid and separation costs.

\begin{proposition}
\label{prop:abstract-closure}
Fix a quartic Salem number $\beta$ with $\beta^2>2$. Suppose that there
is a sequence of localized trajectories satisfying
\[
 Y_j^{(k)}\equiv c_k\beta^j\pmod{\mathbb Z},\qquad
 Y_j^{(k)}=H_j^{(k)}+\mathbf1_{\{j\ge\ell_k\}}\chi_j^{(k)},
\]
\[
\begin{aligned}
 |\chi_j^{(k)}|&\le\Delta_k,\\
 |H_j^{(k)}|
 &\le hr^{j-\ell_k+1}\quad(j\ge\ell_k),\\
 0<c_*&\le|c_k|\le h<1/100,
\end{aligned}
\]
where $h,c_*$ are independent of $k$ and $\ell_k\ge1$.
Assume that 
$$
E_k=\sum_{j\in\mathbb Z}V(H_j^{(k)})<\infty
$$ 
and that
\[
 \Delta_k>0,\qquad \Delta_k\longrightarrow0,
 \qquad E_k=o\!\left(\log\frac1{\Delta_k}\right).
\]
Then $H_N<N\log\beta$ for some fixed finite positive integer $N$.
In particular, $\dim_{\mathrm H}\nu_{\beta^{-1}}<1$.
\end{proposition}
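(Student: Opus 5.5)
The plan is to combine Lemma~\ref{lem:fourier-restoration}, the product formula \eqref{eq:fourier-product} and Lemma~\ref{lem:entropy} with $Q=\beta^2$, and then conclude with Lemma~\ref{lem:block}. Fix $k$, to be chosen large, and write $\Delta=\Delta_k$, $E=E_k$, $c=c_k$. Choose the horizon
\[
 J=J_k:=\left\lfloor \frac{1}{C_0\Delta^2}\right\rfloor ,
\]
where $C_0=2\pi^2$. For $0\le n\le J$, \eqref{eq:prefix-action} gives $\sum_{j\le n}V(Y_j^{(k)})\le E+A\Delta+2$ once $\Delta$ is small. By \eqref{eq:fourier-product} and the congruence $Y_j^{(k)}\equiv c\beta^j$, this yields
\[
 |\widehat\nu(\pi|c|\beta^n)|\ge a:=\exp\bigl(-\tfrac12(E+A\Delta+2)\bigr).
\]
Since $a\le e^{-1}$ already, I would additionally cap $a$ at $1/2$ by replacing it with $\min\{a,1/2\}$, which is harmless.

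Next I would select the frequencies $\xi_i=\pi|c|\beta^{n_i}$ with $n_i=n_0+2(i-1)$, so that $\xi_{i+1}=\beta^2\xi_i=Q\xi_i$. Here $n_0$ is the least $n\ge0$ with $\pi|c|\beta^{n}\ge\pi$. By the final remark of Section~\ref{sec:hierarchy}, or directly from $|c_k|\ge c_*$, we have $n_0\le s_0:=\lceil\log(1/c_*)/\log\beta\rceil$, which is independent of $k$. Take $M=\lfloor (J-s_0)/2\rfloor$, so that every $n_i\le J$. For the scale condition $\beta^{-N}\xi_M\le1$, choose $N=J+s_1$ with $s_1$ a fixed integer such that $\pi h\beta^{-s_1}\le1$; this works because $|c|\le h$. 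Lemma~\ref{lem:entropy} then gives
\[
 H_N\le N\log\beta-\tfrac12 Ma^2+2\log N+C_\beta .
\]

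It remains to show that $\tfrac12Ma^2>2\log N+C_\beta$ for large $k$. I expect this to be the main obstacle, although it reduces to the growth hypothesis. We have $M\asymp \Delta^{-2}$ and $\log N\le 2\log(1/\Delta)+O(1)$, while
\[
 a^2=\exp\bigl(-E-O(1)\bigr)=\exp\bigl(-o(\log(1/\Delta))\bigr).
\]
Hence
\[
 \log(Ma^2)\ge 2\log(1/\Delta)-o(\log(1/\Delta))-O(1)\to\infty,
\]
and in particular $Ma^2\ge \Delta^{-1}$ for large $k$. This dominates $4\log(1/\Delta)+O(1)$, because $\Delta_k\to0$ and $\Delta_k>0$ give $\log(1/\Delta_k)\to\infty$. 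The only care needed is that every implied constant ($s_0,s_1,A,C_\beta$) is independent of $k$. This independence is exactly why the hypotheses require uniform bounds $c_*\le|c_k|\le h$ and a uniform bound on the right tail of $H^{(k)}$.

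So for some large $k$ we obtain $H_N<N\log\beta$ at the fixed finite $N=N_k$. Lemma~\ref{lem:block} then gives $\dimH\nu_{\beta^{-1}}\le H_N/(N\log\beta)<1$.
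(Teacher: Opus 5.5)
Your proposal is correct and is essentially the paper's proof. The restoration lemma and \eqref{eq:fourier-product} give $|\widehat\nu(\pi|c_k|\beta^n)|\ge a$ for $0\le n\le J$; Lemma~\ref{lem:entropy} with $Q=\beta^2$ is applied to every second frequency from a uniformly bounded offset; the gain $\frac12Ma^2$ beats $2\log N+C_\beta$; and Lemma~\ref{lem:block} finishes.

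The only difference is the horizon. The paper takes $J_k=\lceil\Delta_k^{-1}\rceil$, so the quadratic restoration cost is $O(\Delta_k)$ and the gain is of order $\Delta_k^{-1}e^{-E_k}$. Your $J\approx(C_0\Delta_k^2)^{-1}$ accepts an $O(1)$ cost for a gain of order $\Delta_k^{-2}e^{-E_k}$, so it would even tolerate $E_k\le(2-\varepsilon)\log(1/\Delta_k)$. Your shift $s_1$ can be taken to be $0$, since $\pi h<1$.
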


\begin{proof}
For all sufficiently large $k$, we may apply
Lemma~\ref{lem:fourier-restoration}. Put
\[
 \begin{gathered}
 x_k=\log(1/\Delta_k),\qquad
 J_k=\lceil\Delta_k^{-1}\rceil,\\
 B_k=E_k+A\Delta_k+C_0(J_k+1)\Delta_k^2,
 \qquad a_k=\min\{1/2,e^{-B_k/2}\}.
 \end{gathered}
\]
Here $C_0$ and $A$ are fixed. For $\Delta_k\le1$,
\begin{equation}\label{eq:J-range}
 e^{x_k}\le J_k\le e^{x_k}+1\le2e^{x_k},
\end{equation}
and, with $\Lambda=A+3C_0$,
\[
 B_k\le E_k+\Lambda\Delta_k.
\]
Since $E_k/x_k\to0$ and $\Delta_k\to0$, for all sufficiently large $k$
we have
\begin{equation}\label{eq:B-bound}
 B_k\le x_k/2+1.
\end{equation}
Since $B_k\ge0$, the definition of $a_k$ gives
\begin{equation}\label{eq:a-bound}
 a_k^2\ge\tfrac14e^{-B_k}.
\end{equation}

Let $s_k$ be the least nonnegative integer for which
$|c_k|\beta^{s_k}\ge1$. The uniform lower bound $c_*$ gives a fixed
finite integer $S$ with $s_k\le S$ for every $k$. For $J_k\ge s_k$,
set
\[
 M_k=1+\left\lfloor\frac{J_k-s_k}{2}\right\rfloor,
 \qquad
 t_{j+1}=\pi|c_k|\beta^{s_k+2j}\quad(0\le j<M_k).
\]
These frequencies start at or above $\pi$ and have common ratio
$\beta^2>2$. Every selected index is at most $J_k$, so
\begin{equation*}
 \beta^{-J_k}t_{M_k}\le\pi|c_k|\le\pi h<1.
\end{equation*}
Equations \eqref{eq:fourier-product} and \eqref{eq:prefix-action} give
$|\widehat\nu(t_j)|\ge e^{-B_k/2}\ge a_k$.
Thus Lemma~\ref{lem:entropy} applies with $N=J_k$ and $a=a_k$.

If $J_k\ge2S$, then $M_k\ge J_k/4$. Equations
\eqref{eq:J-range}, \eqref{eq:B-bound}, and \eqref{eq:a-bound}
consequently give
\begin{equation*}
 \frac12M_ka_k^2\ge\frac{J_ke^{-B_k}}{32}
 \ge\frac{e^{x_k/2}}{32e}.
\end{equation*}
On the other hand, for the fixed positive constant
$C'=2\log2+C_\beta$,
\begin{equation*}
 2\log J_k+C_\beta\le2x_k+C'.
\end{equation*}
Since $x_k\to\infty$, the gain $e^{x_k/2}/(32e)$ eventually
exceeds $2x_k+C'$. We may therefore fix a finite index $k_*$ for
which all the preceding conditions hold and
\[
 \tfrac12M_{k_*}a_{k_*}^2>2\log J_{k_*}+C_\beta.
\]
Set $N=J_{k_*}$. Lemma~\ref{lem:entropy} gives $H_N<N\log\beta$.
Applying Lemma~\ref{lem:block} at this fixed scale proves the strict
dimension bound.
\end{proof}

\begin{proof}[Proof of Theorem~\ref{thm:main}]
If $\beta>2$, then $H_1=\log2<\log\beta$.
Lemma~\ref{lem:block}, applied with $N=1$, gives
\[
 \dim_{\mathrm H}\nu_{\beta^{-1}}
 \le\frac{\log2}{\log\beta}<1.
\]
Since $2$ is not a quartic Salem number, it remains to consider
$1<\beta<2$.

By Proposition~\ref{prop:quartic-classification}, every quartic
Salem number in this range is a root of one of the two polynomials
in the introduction. Lemma~\ref{lem:roots} shows that both satisfy
$\beta^2>2$. For either parameter,
Proposition~\ref{prop:hierarchy} gives a sequence of reciprocal
localized orbits with
\[
 \Delta_k>0,\qquad \Delta_k\longrightarrow0,\qquad
 E_k=o\!\left(\log\frac1{\Delta_k}\right).
\]
These orbits have a common left endpoint. After translating this
endpoint to zero, the bound \eqref{eq:c-bound} and the identities
\eqref{eq:global-congruence}--\eqref{eq:localized-structure}
show that the remaining hypotheses of
Proposition~\ref{prop:abstract-closure} are satisfied.
We may therefore apply that proposition to obtain a finite
positive integer $N$ such that
\[
 H_N<N\log\beta.
\]
Applying Lemma~\ref{lem:block} once more yields
\[
 \dim_{\mathrm H}\nu_{\beta^{-1}}
 \le\frac{H_N}{N\log\beta}<1,
\]
as required.
\end{proof}

\appendix
\section{Explicit finite configurations}\label{app:explicit}

We now apply the Fourier--Shannon criterion to two explicit configurations
and prove Theorem~\ref{thm:quantitative}. The seeds and returns follow
\cite[Appendix~A]{CFMZ}; the accompanying exact arithmetic programs
recompute the required numerical bounds. For the first parameter, we
start with $U_1^3$ and apply two long mirrors. For the second, we use a
finite Laurent-polynomial modification of $U_2^4$, with an index
translation, and estimate it through a Green sequence and a direct
difference. Each construction supplies a finite Fourier input of the
type used in the main proof.

Proposition~\ref{prop:hierarchy} uses a return schedule to prove
existence. Here we choose the delays directly and check the hypotheses
of the finite estimates. We retain the notation
$\kappa_\lambda=1/P'(\lambda)$ and
$\mathcal C(t)=\pi^2\sec^2(\pi t)$.

\subsection{The case $P=P_1$}
\label{app:p1}

For $P=P_1$, write
\[
 u=\frac{1+\sqrt{13}}2,\qquad
 v=\frac{1-\sqrt{13}}2,\qquad \alpha=u+1=U_1(\beta).
\]
Take the power seed $K_{\rm s}=U_1^3$ from Lemma~\ref{lem:seed}
and the core $[-11,15]$. The expanding coordinates are
\begin{equation}\label{app:p1-seed-orbit}
 \xi_t=-\kappa_\beta\beta^{t-1}\alpha^3,
 \qquad Y_t=\operatorname{can}(\xi_t).
\end{equation}
The elementary Laurent-polynomial identity
\begin{equation}\label{app:p1-identity}
 XU_1(X)^3-(10X^2+13X+10)
 =X^{-2}(X^2+4X+1)P_1(X)
\end{equation}
relates this seed to the polynomial used in the computation. At every
root of $P_1$, the polynomial $10X^2+13X+10$ agrees with $XU_1^3$.
Thus its coordinate with index $i$ is the coordinate
\eqref{app:p1-seed-orbit} with index $t=i+1$. The program uses the
core $[-12,14]$ in the $i$ coordinate. This translation preserves the
action, amplitudes, and tail coefficients.

Here $j_*=4$. Localize by setting $H_t=Y_t$ for $t\le15$ and
$H_t=\xi_{4-t}$ for $t\ge16$. The exact formulas
$(u+1)(v+1)=-1$ and $|1-\zeta^2|^2=u$ give
\begin{equation*}
 h_0=|\kappa_\beta|r^{12}\alpha^3,
 \qquad
 \Delta_0=\frac{2\alpha^{-3}}{\sqrt{13}\sqrt u}.
\end{equation*}
Outward rational evaluation verifies
\begin{equation*}
 E_0<\frac{61}{5},\qquad
 \vartheta_0>\frac{39}{500},\qquad
 h_0<\frac1{100},\qquad
 0<\Delta_0<\frac{203}{20000}.
\end{equation*}
Here $E_0$ includes both localized real tails. Their total action is
at most
\[
 2\mathcal C(h_0r)\frac{h_0^2r^2}{1-r^2}.
\]
The core has $27$ coordinates. The first four expanding coordinates
have moduli at most $\beta^3h_0<8/100$, so no centering is needed at
the left boundary and the recurrence is homogeneous there. Beyond
the right boundary, $h_0r^{t-15}+\Delta_0<1/2$ shows that the sum of
the stable and neutral terms is already canonical. These bounds
verify the two infinite tails.

The nonzero entries of $P_1(S)Y$ in the $t$ coordinate are
\[
\begin{aligned}
 (P_1(S)Y)_{-7}&=1,
 & (P_1(S)Y)_{-4}&=-1,\\
 (P_1(S)Y)_4&=-1,
 & (P_1(S)Y)_7&=1.
\end{aligned}
\]
The program checks the corresponding indices $-8,-5,3,6$ in its $i$
coordinate. The boundary identities show that the forcing vanishes
outside the core.

Apply Lemma~\ref{lem:mirror} with
\begin{equation*}
 L_1=124,\qquad L_2=489637838.
\end{equation*}
Both delays are even and exceed the core lengths to which they are
applied. The certified returns are
\[
 |1-\zeta^{L_1}|<2507\cdot10^{-6},\qquad
 |1-\zeta^{L_2}|<3736\cdot10^{-12}.
\]
At each stage the program checks the mirror hypotheses and applies
\eqref{eq:core-derivative-bound} and \eqref{eq:mirror-action}.
The actual real-tail coefficient decreases, so we keep the seed value
$h_0$ as an upper bound. The final core is
\[
 [-11,489637977],\qquad \ell_1=489637989.
\]
After translating its left endpoint to zero, the certified bounds are
\begin{equation*}
 E<50.27,\qquad
 \Delta<9.498\cdot10^{-14},\qquad
 |H_j|<\frac12-\frac{39}{500},
\end{equation*}
and
\begin{equation}\label{app:p1-tail}
 |H_j|\le\frac1{100}
 r^{\operatorname{dist}(j,[0,\ell_1-1])}
 \qquad(j\notin[0,\ell_1-1]).
\end{equation}
The expanding left coefficient is nonzero and satisfies
\begin{equation}\label{app:p1-left}
 |c_1|=|\kappa_\beta|r^{12}\alpha^3
              (1-r^{124})(1-r^{489637838}).
\end{equation}
Each mirror multiplies the actual real-tail coefficient by
$1-r^L<1$, which gives the common bound \eqref{app:p1-tail}.

\subsection{The case $P=P_2$}
\label{app:p2}

For $P=P_2$, define the bilateral Green sequence
\begin{equation*}
 g_i=\begin{cases}
  -\kappa_\beta\beta^{i-1},&i\le0,\\
  \kappa_r r^{i-1}
     +2\operatorname{Re}(\kappa_\zeta\zeta^{i-1}),&i\ge1.
 \end{cases}
\end{equation*}
We have $P(S)g=e^{(0)}$, where $e^{(0)}$ equals one at index zero
and zero elsewhere. To see this, note that both branches are
homogeneous and their difference at index $i$ is $T_{i-1}$ from
\eqref{eq:trace-int}. Since $T_0=T_1=T_2=0$ and $T_3=1$, the forcing
values at the possible junction indices $-3,-2,-1,0$ are $0,0,0,1$.
The same identity gives
$g_i\equiv-\kappa_\beta\beta^{i-1}\pmod{\mathbb Z}$ for every $i$.

The following estimate allows a core without reflection symmetry and an odd delay.
We will use it for the final difference in the second construction.

\begin{lemma}\label{app:direct-difference}
Let $\ell\ge1$ be an integer, and let $Y$ be a canonical real sequence
with nonzero integer forcing
$P(S)Y$ supported in $[0,\ell-1]$. Suppose its core action and maximum
satisfy
\[
 E_c=\sum_{i=0}^{\ell-1}V(Y_i)<\infty,
 \qquad |Y_i|\le a_*\quad(0\le i<\ell),
\]
and its tails have the form
\[
 Y_i=a_-\beta^i\quad(i<0),\qquad
 Y_i=2\operatorname{Re}(\gamma\zeta^i)+a_+r^{i-\ell}
       \quad(i\ge\ell),
\]
where $|a_-|,|a_+|\le\tau$ and $\Delta=2|\gamma|>0$.
For an integer $L\ge\ell$, put $g=L-\ell$ and
\[
 A_r(m)=\left(\frac{1-r^{2m}}{1-r^2}\right)^{1/2},\qquad
 A_\zeta(m)=
 \left(\min\left\{m,\frac m2+\frac1{|1-\zeta^2|}\right\}\right)^{1/2}.
\]
Define
\begin{align*}
 \alpha_1&=\tau r^{g+1}A_r(\ell),&
 \alpha_2&=\Delta A_\zeta(\ell)+\tau r^gA_r(\ell),\\
 \alpha_g&=\Delta A_\zeta(g)+\tau(1+r)A_r(g),&
 m_c&=a_*+\Delta+\tau r^g,\qquad m_g=2\Delta+2\tau.
\end{align*}
If $m_c,m_g<1/2$, then $Y_i^+=Y_i-Y_{i-L}$ is canonical, has nonzero
integer forcing supported in $[0,\ell+L-1]$, and has outgoing
amplitude $\Delta^+=\Delta|1-\zeta^{-L}|$. Its core action satisfies
\begin{align}
 E_c^+\le{}&2E_c+
 2\sqrt{\mathcal C(m_c)E_c}\,(\alpha_1+\alpha_2)\notag\\
 &+\mathcal C(m_c)(\alpha_1^2+\alpha_2^2)
 +\mathcal C(m_g)\alpha_g^2. \label{app:direct-action}
\end{align}
The new real-tail coefficients still have moduli at most $\tau$.
\end{lemma}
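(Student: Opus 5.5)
The plan is to treat $Y^+=Y-Y_{\cdot-L}$ region by region, following the proof of Lemma~\ref{lem:mirror}. The difference is that there is no reflection, so the second copy is estimated directly rather than as a mirror image. Linearity of $P(S)$ gives $P(S)Y^+=P(S)Y-S^{-L}P(S)Y$. This is integer-valued and supported in $[0,\ell-1]\cup[L,L+\ell-1]\subset[0,\ell+L-1]$. It is nonzero because $L\ge\ell$, so the two supports are disjoint; if $L=\ell$ they abut but still do not cancel. The tails of $Y^+$ come from subtracting the tails of $Y$. For $i<0$, $Y^+_i=a_-(1-\beta^{-L})\beta^i$. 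For $i\ge\ell+L$, $Y^+_i=2\operatorname{Re}(\gamma(1-\zeta^{-L})\zeta^i)+a_+(1-r^{L})r^{i-\ell}$. Hence the new outgoing amplitude is $\Delta|1-\zeta^{-L}|$, which is positive since $\zeta$ is not a root of unity. The new real coefficients are $a_-(1-\beta^{-L})$ and $a_+(1-r^L)$ after re-indexing the stable tail to start at $\ell+L$, so both have modulus at most $\tau$.

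Canonicity is checked in three regions. \emph{First core} $0\le i<\ell$: here $Y^+_i=Y_i-a_-\beta^{i-L}$, and $|a_-\beta^{i-L}|\le\tau r^{L-i}\le\tau r^{g+1}$. \emph{Gap} $\ell\le i<L$: both terms are tails, so $|Y^+_i|\le\Delta+\tau+\tau\le m_g<1/2$. \emph{Second core} $L\le i<L+\ell$: here $Y^+_i=Y_i-Y_{i-L}$ with $Y_i$ a tail value, of size at most $\Delta+\tau r^{g}$ roughly, so $|Y^+_i|\le a_*+\Delta+\tau r^g=m_c<1/2$. The values lie in $(-1/2,1/2)$ and are congruent to the correct residues, so they are canonical; beyond the right end, the tail bounds already give canonical values. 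I will need to be careful with the exact exponent $r^g$ versus $r^{g+1}$ at the boundary index $i=L$.

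For the action, I write the new core action as a sum over the three regions. On each copy I use $V(Y_t+e_t)\le V(Y_t)+|V'(Y_t)||e_t|+\tfrac12\sup V''\,e_t^2$ on $[-m_c,m_c]$. Then I apply \eqref{eq:potential-basic}: Cauchy--Schwarz with $|V'|^2\le4\mathcal C V$ gives $\sum|V'(Y_t)||e_t|\le2\sqrt{\mathcal C(m_c)E_c}\,\|e\|_2$, and $\tfrac12 V''\le\mathcal C$ gives the quadratic term. The perturbations are as follows. On the first copy, $e_t=-a_-\beta^{t-L}$, with $\|e\|_2\le\tau r^{g+1}A_r(\ell)=\alpha_1$. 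On the second copy, $e_t$ is the right tail of $Y$ at $t+L$, an oscillation plus a real term. Its $\ell^2$ norm is at most $\Delta A_\zeta(\ell)+\tau r^gA_r(\ell)=\alpha_2$. Here the oscillatory square sum is bounded either trivially by $\ell\Delta^2$, or by the geometric-sum bound $\ell/2+1/|1-\zeta^2|$ as in the proof of Lemma~\ref{lem:mirror}. The two copies together contribute $2E_c$ plus these errors. In the gap, $V(u)\le\mathcal C(m_g)u^2$ and Minkowski give $\mathcal C(m_g)\alpha_g^2$. The gap entries are an oscillation over $g$ indices plus two real geometric tails, bounded in $\ell^2$ by $\tau A_r(g)$ and $\tau rA_r(g)$ respectively.

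I expect the main obstacle to be the bookkeeping of the geometric tail exponents. In particular, the right stable tail of $Y$ is normalized as $a_+r^{i-\ell}$ starting at $i=\ell$, and this must be matched against the stated $\alpha_2$ and $\alpha_g$, which use $r^g$ and $(1+r)$. Otherwise the argument is a direct transcription of Lemma~\ref{lem:mirror}.
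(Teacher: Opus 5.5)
Your proposal is correct and follows the paper's proof essentially step for step: the perturbation $-a_-\beta^{i-L}$ on the first core, the comparison of the translated core with $-Y_t$ using the old right tail as the perturbation (evenness of $V$, Taylor, and Cauchy--Schwarz via $|V'|^2\le4\mathcal C V$), the bound $V(u)\le\mathcal C(m_g)u^2$ in the gap, and direct substitution in the exterior tails. One small slip: for $i\ge\ell+L$ the stable part is $a_+(1-r^{-L})r^{i-\ell}=(r^L-1)a_+r^{i-\ell-L}$, not $a_+(1-r^L)r^{i-\ell}$, but the re-indexed coefficient still has modulus $(1-r^L)|a_+|\le\tau$, as you conclude.
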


\begin{proof}
On the first core $0\le i<\ell$, the perturbation from $Y_i$ is
$-a_-\beta^{i-L}$ and has Euclidean norm at most $\alpha_1$.
On the translated core $i=L+t$, $0\le t<\ell$, compare $Y_i^+$ with
$-Y_t$. The perturbation is the old right tail $Y_{L+t}$, whose
Euclidean norm is at most $\alpha_2$.
The real tails give geometric sums. For the neutral term, we use
\begin{equation}\label{app:cosine-square-sum}
 \sum_{j=0}^{m-1}\cos^2(\theta+j\arg\zeta)
 \le\min\left\{m,\frac m2+\frac1{|1-\zeta^2|}\right\}.
\end{equation}
For the second bound in \eqref{app:cosine-square-sum}, write
$\cos^2 t=(1+\cos2t)/2$ and sum the progression in $\zeta^2$.
The nonconstant part has modulus at most $1/|1-\zeta^2|$.
The first bound follows term by term.

The Taylor segments on both cores lie in $[-m_c,m_c]$.
By \eqref{eq:potential-basic}, the derivative vectors have norm at
most $2\sqrt{\mathcal C(m_c)E_c}$. Apply Taylor's theorem and
Cauchy--Schwarz, using the evenness of $V$. The first three terms
on the right side of \eqref{app:direct-action} then bound the action
on the two cores.

In the gap, write $i=\ell+t$ with $0\le t<g$. The difference is
\[
 2\operatorname{Re}(\gamma\zeta^{\ell+t})
       +a_+r^t-a_-r^{g-t}.
\]
Its Euclidean norm is at most $\alpha_g$ and its pointwise modulus
is at most $\Delta+2\tau\le m_g$. The estimate
$V(u)\le\mathcal C(m_g)u^2$ gives the remaining term in
\eqref{app:direct-action}. The three regions partition the new core.

Outside the core, direct substitution gives
\begin{equation*}
 a_-^+=(1-r^L)a_-,\qquad
 a_+^+=(r^L-1)a_+,\qquad
 \gamma^+=(1-\zeta^{-L})\gamma.
\end{equation*}
Thus the real-tail coefficients decrease in modulus and the new
neutral amplitude is as stated. The left exterior has modulus at most
$\tau r$, and the right exterior at most $2\Delta+\tau\le m_g$.
The core, gap, and exterior bounds therefore place every coordinate
of $Y^+$ strictly between $-1/2$ and $1/2$. Finally,
$P(S)Y^+=P(S)Y-(P(S)Y)_{\cdot-L}$; the two forcing supports are
disjoint since $L\ge\ell$, so their difference is nonzero and integer.
\end{proof}

Define the integral polynomials
\begin{align*}
 K_{\rm b}(X)&=1-X+X^2-X^3+X^4-X^7+X^8-X^9+X^{10}-X^{11},\\
 K_{\rm seed}(X)&=K_{\rm b}(X)(1-X^{2673})
                 =\sum_{j=0}^{2684}k_jX^j. 
\end{align*}
The finite Green configuration is
\begin{equation*}
 Y_i^{\rm s}=\sum_{j=0}^{2684}k_jg_{i-100-j},
 \qquad \ell_{\rm s}=2885.
\end{equation*}
Its forcing is $k_{i-100}$, supported in
$[100,2784]\subset[0,2884]$. The two real-tail coefficients and
neutral coefficient are exactly
\begin{equation}\label{app:p2-seed-tails}
 \begin{split}
 a_-&=-\kappa_\beta r^{101}K_{\rm seed}(r),\\
 a_+&=\kappa_r\sum_{j=0}^{2684} k_jr^{\ell_{\rm s}-101-j},\\
 \gamma&=\kappa_\zeta\zeta^{-101}K_{\rm seed}(\zeta^{-1}).
 \end{split}
\end{equation}
The certificate evaluates the $2885$ core coordinates and verifies
\begin{equation*}
\begin{aligned}
 |a_-|,|a_+|&<10^{-24},
 & 0<\Delta=2|\gamma|&<47\cdot10^{-7},\\
 E_c&<24,
 & \max_{0\le i<2885}|Y_i^{\rm s}|&<0.394.
\end{aligned}
\end{equation*}
Together with \eqref{app:p2-seed-tails}, these bounds show that both
infinite tails are canonical. Thus the seed is canonical at every
integer index.

To relate this configuration to the power seed $U_2^4$, use the identity
\begin{equation}\label{app:p2-identity}
 \begin{split}
 K_{\rm b}(X)-X^5(1-X)U_2(X)^4
 =P_2(X)(1+X^2-X^3+X^4-X^5-X^7).
 \end{split}
\end{equation}
Put
\[
 \mathcal K_2(X)=U_2(X)^4(1-X^{-1})(1-X^{-2673}).
\]
At the roots of $P_2$, \eqref{app:p2-identity} implies
$K_{\rm seed}(X^{-1})=X^{-5}\mathcal K_2(X)$.
The Green congruence now gives
\begin{equation}\label{app:p2-power-seed-orbit}
 Y_i^{\rm s}=
 \operatorname{can}\!\left(
      -\kappa_\beta\beta^{i-106}\mathcal K_2(\beta)\right).
\end{equation}
Thus the trajectory is generated by $\mathcal K_2$, with orbit
index $i-105$. We treat the factors of lengths $1$ and $2673$ as
algebraic preprocessing. For the next step we use the direct-difference
estimate, keeping the $2885$-coordinate Green core and its tail
indexing. This avoids any symmetry requirement on the core in
\eqref{app:p2-power-seed-orbit}.

Apply Lemma~\ref{app:direct-difference} once, with
\begin{equation*}
 \tau=10^{-24},\qquad L=1846082221.
\end{equation*}
The program verifies the return bound
$|1-\zeta^L|<2365\cdot10^{-13}$ and the lemma's pointwise hypotheses
before applying the action estimate. The new core length is
\[
 \ell_2=2885+1846082221=1846085106,
\]
and the forcing lies in the disjoint union
\[
 [100,2784]\ \cup\ [1846082321,1846085005]
 \subset[0,\ell_2-1].
\]
Localize the new sequence by
\[
 H_i=Y_i^+\quad(i<\ell_2),\qquad
 H_i=a_+^+r^{i-\ell_2}\quad(i\ge\ell_2).
\]
This removes $2\operatorname{Re}(\gamma^+\zeta^i)$ from the final
right tail. The direct-difference estimate places the new core action
strictly below $47.626735096901$. We must also include the two localized tails.
The left tail contributes at most
$\mathcal C(\tau)\tau^2r^2/(1-r^2)$, while the right tail, whose
first coefficient occurs at $i=\ell_2$, contributes at most
$\mathcal C(\tau)\tau^2/(1-r^2)$. Adding these contributions gives the full localized action bound
\begin{equation}\label{app:p2-full-action}
 E\le E_c^++\frac{2\mathcal C(\tau)\tau^2}{1-r^2}.
\end{equation}
The added term is certified below $2.75\cdot10^{-47}$. The resulting
full bounds are
\begin{equation*}
 E<47.626736,\qquad
 \Delta<1.105305\cdot10^{-15},\qquad |H_i|<2/5,
\end{equation*}
and
\begin{equation*}
 |H_i|\le10^{-23}r^{\operatorname{dist}(i,[0,\ell_2-1])}
 \qquad(i\notin[0,\ell_2-1]).
\end{equation*}
Indeed, $r>1/2$ and the first right-tail coefficient satisfies
$|a_+^+|<10^{-24}<10^{-23}r$. The program also checks
$|a_+^+|<10^{-23}r$ directly, with the right-tail index as above.
For the left tail, use $|a_-^+|<10^{-24}$.

The nonzero left coefficient after the difference has modulus
\begin{equation}\label{app:p2-left}
 |c_2|=|\kappa_\beta|r^{101}K_{\rm b}(r)
           (1-r^{2673})(1-r^{1846082221}).
\end{equation}
Here $K_{\rm b}(r)>0$, either by \eqref{app:p2-identity} or by grouping
its terms as positive differences.

\subsection{The certified finite inputs}\label{app:finite-inputs}

Table~\ref{tab:short-stages} gives the recomputed strict upper bounds.
For the first parameter, each action bound includes both localized
tails. For the second, the seed entry bounds the core action and the
final entry includes the tails through \eqref{app:p2-full-action}.
\begin{table}[htbp]
\centering
\begingroup
\small
\setlength{\tabcolsep}{4pt}
\begin{tabular}{@{}llcc@{}}
\toprule
Parameter & Stage
 & \shortstack{Action\\upper bound}
 & \shortstack{Amplitude\\upper bound}\\
\midrule
$P_1$ & Seed $U_1^3$
 & $12.143280$ & $1.014602\cdot10^{-2}$\\
 & First mirror
 & $24.352783$ & $2.542925\cdot10^{-5}$\\
 & Second mirror
 & $50.269214$ & $9.497533\cdot10^{-14}$\\
\midrule
$P_2$ & Preprocessed $U_2^4$ seed
 & $23.705603$ & $4.674807\cdot10^{-6}$\\
 & Direct difference, with tails
 & $47.626736$ & $1.105305\cdot10^{-15}$\\
\bottomrule
\end{tabular}
\endgroup
\caption{Certified bounds at the finite stages. The full localized
action of the final configuration is used in the Fourier estimate.}
\label{tab:short-stages}
\end{table}

After translating the first core to start at zero, the inputs used below
are as follows.
\begin{table}[htbp]
\centering
\begin{tabular}{@{}lcc@{}}
\toprule
Quantity & $P_1$ & $P_2$\\
\midrule
Core length $\ell$ & $489637989$ & $1846085106$\\
Full action bound $E_{\max}$ & $50.27$ & $47.626736$\\
Amplitude bound $\Delta$ & $9.498\cdot10^{-14}$ & $1.105305\cdot10^{-15}$\\
Tail prefactor $h$ & $10^{-2}$ & $10^{-23}$\\
Least $s\ge0$ with $|c|\beta^s\ge1$ & $10$ & $105$\\
\bottomrule
\end{tabular}
\caption{Rational inputs to the finite Fourier certificate.
The convention is $|H_j|\le hr^{j-\ell+1}$ for $j\ge\ell$,
so the first right-tail bound is $hr$.}
\label{tab:finite-inputs}
\end{table}

To use the same frequency count for both configurations, we note that
\begin{equation}\label{app:common-c-bound}
 2^{-112}<|c_i|<1\qquad(i=1,2).
\end{equation}
The program checks these inequalities. They also follow directly
from the coefficient formulas. For $P_1$, $|P_1'(\beta)|<49$, $\alpha>3$, $r>1/2$, and
the two factors $1-r^{L_j}$ in \eqref{app:p1-left} exceed $1/2$.
Thus $|c_1|>27/(49\cdot2^{14})>2^{-112}$.
For $P_2$, $|P_2'(\beta)|<62$ and
\[
 K_{\rm b}(r)=(1-r)+(r^2-r^3)+(r^4-r^7)
                  +(r^8-r^9)+(r^{10}-r^{11})>2/5.
\]
The two remaining factors in \eqref{app:p2-left} exceed $1/2$, so
$|c_2|>2^{-101}/620>2^{-112}$.
The upper bounds follow from the certified real-tail coefficients,
which decrease under each difference.
Since $\beta^2>2$, these lower bounds already imply $s\le224$;
the certificate uses the looser common bound $s\le256$.

The first configuration satisfies the Fourier congruence by its
definition as a centered expanding orbit. For the second, use
\eqref{app:p2-power-seed-orbit} and take the direct difference.
Alternatively, apply Lemma~\ref{lem:recurrence-congruence} to the
integer forcing and the exact left tail. Thus both action bounds
apply to the Bernoulli Fourier orbit at every integer index.

\section{Finite Fourier bounds and the entropy deficit}
\label{app:scalar}

We next restore the neutral term over a finite range of orbit indices.
Retaining its oscillatory square sum gives the following bound.

\begin{lemma}
\label{app:sharp-restoration}
Suppose that
\[
 y_j=H_j+\mathbf1_{\{j\ge\ell\}}\chi_j,\qquad
 E=\sum_{j\in\mathbb Z}V(H_j)<\infty,\qquad
 |H_j|\le hr^{j-\ell+1}\quad(j\ge\ell),
\]
where $\ell\ge1$ is an integer and
\[
 \chi_j=\delta\cos(\theta+(j-\ell)\phi),\qquad
 \zeta=e^{i\phi},\qquad 0\le\delta\le\Delta,
 \qquad hr+\Delta\le1/100.
\]
Let $d=|1-\zeta^2|>0$. For every integer $J\ge0$ and every
integer $n$ with $0\le n\le J$,
\begin{equation*}
 \sum_{j\le n}V(y_j)
 \le E+\frac{20\Delta hr}{1-r}
       +10\Delta^2\left(\frac{J+1}{2}+\frac1d\right).
\end{equation*}
\end{lemma}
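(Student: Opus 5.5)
The argument will follow the proof of Lemma~\ref{lem:fourier-restoration}, with two changes. The pointwise second-order constant is replaced by a sharper one valid on the small interval $[-1/100,1/100]$. The quadratic restoration term is kept as an oscillatory square sum rather than being bounded term by term.

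The first step is to dispose of the case $n<\ell$. Then $y_j=H_j$ for all $j\le n$, so the prefix action is at most $\sum_j V(H_j)=E$ because $V\ge0$. From now on assume $\ell\le n\le J$. For $j\ge\ell$ we have $|H_j|\le hr$ and $|\chi_j|\le\delta\le\Delta$. Hence $H_j$, $H_j+\chi_j$ and the segment joining them all lie in $[-1/100,1/100]$ by the hypothesis $hr+\Delta\le1/100$.

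On this interval $V''(u)=2\pi^2\sec^2(\pi u)\le2\pi^2\sec^2(\pi/100)<20$, since $\pi^2<9.87$ and $\sec^2(\pi/100)$ is barely above $1$. This gives $|V'(u)|\le20|u|$ there. Taylor's theorem then yields
\[
 V(H_j+\chi_j)\le V(H_j)+20|H_j|\,|\chi_j|+10\chi_j^2 .
\]
Summing over $\ell\le j\le n$ and adding the unchanged coordinates $j<\ell$, the base terms contribute at most $\sum_{j\le n}V(H_j)\le E$. The mixed terms are at most
\[
20\Delta\sum_{j\ge\ell}hr^{j-\ell+1}=\frac{20\Delta hr}{1-r}.
\]

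The remaining work, and the only step needing care, is the square sum $10\sum_{j=\ell}^n\chi_j^2$ with $m=n-\ell+1\le J+1$ terms. I will use \eqref{app:cosine-square-sum} from the proof of Lemma~\ref{app:direct-difference}: write $\cos^2t=(1+\cos2t)/2$ and sum the progression in $\zeta^2$. The nonconstant part is $\tfrac12\operatorname{Re}\bigl(e^{2i\theta}\sum_{k=0}^{m-1}\zeta^{2k}\bigr)$, and its modulus is at most $\tfrac12\cdot2/|1-\zeta^2|=1/d$. This gives
\[
\sum_{j=\ell}^n\chi_j^2\le\delta^2\left(\frac m2+\frac1d\right)\le\Delta^2\left(\frac{J+1}2+\frac1d\right),
\]
and multiplying by $10$ gives exactly the stated term. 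The main obstacle is only the rational verification that $2\pi^2\sec^2(\pi/100)\le20$, together with the requirement that $d>0$. The latter holds because $\zeta^2$ is not a root of unity (Lemma~\ref{lem:roots}).
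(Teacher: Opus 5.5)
Your proposal is correct and follows the paper's proof essentially step for step. Both arguments use the trivial case $n<\ell$, the Taylor bound $V(H_j+\chi_j)\le V(H_j)+20|H_j|\Delta+10\chi_j^2$ on $[-1/100,1/100]$, the geometric mixed-term sum, and the exact cosine-square progression in $\zeta^2$ giving $m/2+1/d$. The only difference is that the paper makes $V''<20$ fully rational via $\cos^2(\pi u)\ge1-\pi^2u^2$ and $\pi<22/7$, which is precisely the numerical check you flagged.
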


\begin{proof}
For $|u|\le1/100$, $|\sin t|\le|t|$ and $\pi<22/7$ give
\[
 \cos^2(\pi u)\ge1-\pi^2u^2>
 \frac{122379}{122500},\qquad
 V''(u)<\frac{968}{49}\frac{122500}{122379}
       =\frac{2420000}{122379}<20.
\]
Since $V'(0)=0$, we also have $|V'(u)|\le20|u|$ on this interval.
Every segment from $H_j$ to $H_j+\chi_j$, with $j\ge\ell$, stays in
the same interval. Taylor's theorem gives
\[
 V(H_j+\chi_j)
 \le V(H_j)+20|H_j|\Delta+10\chi_j^2.
\]
If $n<\ell$, the conclusion follows immediately from the definition
of $E$. Otherwise let $m=n-\ell+1$. The mixed terms sum to at most
$20\Delta hr/(1-r)$. The exact geometric sum gives
\[
 \sum_{q=0}^{m-1}\cos^2(\theta+q\phi)
 =\frac m2+\frac12\operatorname{Re}
       \left(e^{2i\theta}\frac{1-\zeta^{2m}}{1-\zeta^2}\right)
 \le\frac m2+\frac1d
 \le\frac{J+1}{2}+\frac1d.
\]
Finally, $\sum_{j\le n}V(H_j)\le E$. Since $E$ includes the stable
tail, the bound requires no further $h^2$ term.
\end{proof}

For the two inputs in Table~\ref{tab:finite-inputs}, put
\begin{equation*}
 J_1=2\cdot10^{25},\qquad J_2=10^{26},\qquad N_i=J_i+3.
\end{equation*}
With the strict rational bounds $E_{\max}$ and $\Delta$ from the
table, set
\begin{equation*}
 A_J=E_{\max}+\frac{20\Delta hr}{1-r}
       +10\Delta^2\left(\frac{J+1}{2}
                              +\frac1{|1-\zeta^2|}\right).
\end{equation*}
The certificate verifies $hr+\Delta<1/100$ and gives the strict bounds
\begin{equation*}
 A_{J_1}<51.172120040000027<51.18,
 \quad
 A_{J_2}<47.627346849571513<47.63.
\end{equation*}
Since the actual full action is strictly below $E_{\max}$,
\eqref{eq:fourier-product} and Lemma~\ref{app:sharp-restoration} imply
\begin{equation}\label{app:fourier-square-lower}
 |\widehat\nu_{\beta_i^{-1}}(\pi|c_i|\beta_i^n)|^2
 >e^{-A_{J_i}}\qquad(0\le n\le J_i).
\end{equation}
Thus each configuration gives a lower bound at every frequency in the
finite set $\{\pi|c_i|\beta_i^n:n=0,1,\ldots,J_i\}$ ($i=1,2$).

Choose the rational modulus bounds
\begin{equation}\label{app:a-values}
 a_1=\frac3{4\cdot10^{11}},\qquad
 a_2=\frac9{2\cdot10^{11}}.
\end{equation}
We verify them by integer comparisons. Indeed,
\[
 e<\sum_{j=0}^{6}\frac1{j!}+\frac8{7\cdot7!}
   =\frac{31967}{11760}<\frac{87}{32},
\]
and the program verifies the exact integer inequalities
\begin{align}
 87^{2559}3^{100}
 &<32^{2559}(4\cdot10^{11})^{100},\notag\\
 87^{4763}9^{200}
 &<32^{4763}(2\cdot10^{11})^{200}. \label{app:exponential-integers}
\end{align}
Taking the appropriate roots shows
$e^{51.18}a_1^2<1$ and $e^{47.63}a_2^2<1$. Consequently every
Fourier modulus in \eqref{app:fourier-square-lower} is strictly greater
than the corresponding $a_i$.

We can now count the frequencies and apply the entropy estimate.

\begin{proof}[Proof of Theorem~\ref{thm:quantitative}]
For either parameter let $s$ be the least nonnegative integer such that
$|c|\beta^s\ge1$, and select
\[
 t_{j+1}=\pi|c|\beta^{s+2j},
 \qquad 0\le j\le\left\lfloor\frac{J-s}{2}\right\rfloor.
\]
The first frequency is at least $\pi$, the ratio is
$Q=\beta^2>25/9$, and all selected exponents are at most $J$.
By \eqref{app:common-c-bound} and $s\le256$, their number is at least
\begin{equation*}
 M_0=1+\left\lfloor\frac{J-256}{2}\right\rfloor
    =\frac J2-127.
\end{equation*}
For the scale $N=J+3$ retained in the numerical certificate,
\[
 \beta^{-N}t_{\max}\le\pi|c|\beta^{-3}<1,
\]
using $|c|<1$, $\beta>5/3$, and $\pi<22/7<125/27$.
The two rational $a_i$ in \eqref{app:a-values} are less than $1/9$,
so Corollary~\ref{cor:entropy-sharp} applies.

The exact comparison
\begin{equation}\label{app:log-N-integer}
 163^{60}>N\,60^{60},\qquad
 e>\sum_{j=0}^{5}\frac1{j!}=\frac{163}{60},
\end{equation}
holds for both values of $N$ and proves $\log N<60$.
Hence the entropy penalty is at most $2\log N+4<125$, and
\begin{equation}\label{app:entropy-deficit}
 H_N<N\log\beta-D,
 \qquad D=\frac12M_0a^2-125.
\end{equation}
The resulting positive rationals are
\begin{equation*}
 D_1=\frac{625}{4}-\frac{1143}{32\cdot10^{22}}>156,
 \qquad
 D_2=50500-\frac{10287}{8\cdot10^{22}}>50499.
\end{equation*}
In particular, the finite entropy bounds are
$H_{N_1}<N_1\log\beta_1-156$ and
$H_{N_2}<N_2\log\beta_2-50499$.

The certified algebraic root intervals satisfy
\[
 \beta_1^{20}<(163/60)^{11}<e^{11},\qquad
 \beta_2^{25}<(163/60)^{16}<e^{16}.
\]
Hence $\log\beta_1<11/20$ and $\log\beta_2<16/25$.
The final exact rational comparisons are
\begin{equation}\label{app:dimension-integers}
 \frac{D_1}{(J_1+3)(11/20)}>1.4\cdot10^{-23},\qquad
 \frac{D_2}{(J_2+3)(16/25)}>7.8\cdot10^{-22}.
\end{equation}
Combining \eqref{app:entropy-deficit} with
Lemma~\ref{lem:block} gives
\[
 \dim_{\mathrm H}\nu_{\beta^{-1}}
 \le\frac{H_N}{N\log\beta}
 <1-\frac{D}{N\log\beta}.
\]
The bounds on $\log\beta$ and \eqref{app:dimension-integers} prove
the two claimed upper bounds.
\end{proof}

\section{Exact arithmetic and reproduction}\label{app:arithmetic}

The companion archive contains the verification programs and three
deterministic reports for the upper-bound certificates, algebraic
seeds, and entropy bounds. The \texttt{README.md} file describes
the contents of the archive and gives further instructions for
running the programs. All finite checks can be reproduced by
running the following command from the archive root:
\begin{verbatim}
python3 -S verification/verify_all.py
\end{verbatim}
The program checks the archive manifest, recomputes the three
mathematical reports, and compares them byte for byte with the
supplied versions. It uses only Python's standard library.
All acceptance conditions are explicit and remain active under
Python optimization, without relying on optional \texttt{assert}
statements.

The numerical verification uses rational interval arithmetic
with outward rounding. The fixed denominator is $S=10^{200}$,
so every interval has endpoints in $S^{-1}\mathbb Z$.
Addition is exact on this grid. Multiplication and division
use the extreme values at the interval endpoints, followed
by outward integer rounding. Division is performed only when
the denominator interval excludes zero, while square roots
are enclosed using integer square roots of scaled endpoints.
These operations ensure that the resulting intervals contain
the exact values.

The algebraic roots are computed from the exact expressions
\[
 \beta=\frac{u+\sqrt{u^2-4}}2,\qquad
 r=\frac{u-\sqrt{u^2-4}}2,\qquad
 \zeta=\frac{v+i\sqrt{4-v^2}}2,
\]
with $(u,v)$ given by \eqref{eq:trace-roots}.
Complex numbers are enclosed by rectangular intervals, and
large powers are computed by repeated squaring.

The transcendental functions are also enclosed by rational
intervals. The bounds for $\pi$ follow from Machin's identity
\[
 \pi=16\arctan(1/5)-4\arctan(1/239),
\]
using $160$ terms of each alternating series and bounding the
remainder by the first omitted term. Sine and cosine are
evaluated by $100$-term Taylor sums for arguments of modulus
less than $2$, with remainder bounds $|x|^{201}/201!$ and
$|x|^{200}/200!$, respectively. To evaluate the potential,
let $c=\cos(\pi t)>0$ and $z=(1-c)/(1+c)$. Then
\begin{equation*}
 V(t)=4\sum_{j=0}^{159}\frac{z^{2j+1}}{2j+1}+R,
 \qquad 0\le R\le\frac{4z^{321}}{321(1-z^2)}.
\end{equation*}
This follows by expanding $-2\log c$ and bounding the remaining
positive terms by a geometric series. The program checks
$c>0$ and uses $c\le1$ when enclosing $z$. Thus the
transcendental estimates used in the verification have
rigorous rational bounds.

The finite checks are carried out separately for the two
polynomials. Canonical representatives are accepted only when
both endpoints of their enclosing intervals determine the
same centering integer and the centered interval lies strictly
inside $(-1/2,1/2)$. For $P_1$, the program evaluates the $27$
seed core potentials, verifies the finite forcing and the
bounds needed for the exterior formulas, and applies
\eqref{eq:mirror-action} twice. For $P_2$, it evaluates the
$2885$ seed core potentials, applies \eqref{app:direct-action},
and includes the stable tails through \eqref{app:p2-full-action}.
The program also checks the left coefficients, return
amplitudes, finite Fourier actions, and the inequalities in
\eqref{app:exponential-integers}, \eqref{app:log-N-integer},
and \eqref{app:dimension-integers}.

Two further checks complete the verification. The algebraic-seed
check establishes \eqref{app:p1-identity} and
\eqref{app:p2-identity} using exact integer Laurent-polynomial
arithmetic, including the index translations from the power
seeds to the finite configurations. The entropy-bounds check
takes the certified action, amplitude, and left-coefficient
intervals as inputs, independently isolates the algebraic
roots, and recomputes the rational entropy and dimension
bounds. The upper-bound verification establishes the finite
seed estimates and their propagation, while the entropy
constant follows from the analytic proof of
Corollary~\ref{cor:entropy-sharp}. The supplied reports are
used only for comparison after the computations have been
repeated.

The analytic estimates reduce the required computations to
the finite seed evaluations, outward interval powers, and
scalar inequalities. In particular, the verification does
not require enumerating the large cores or the $2^{N_i}$
words at the entropy scales $N_i$.

\sectionnotoc{Data and Code Availability}\label{sec:data-code}

The certificate data and verification programs supporting the explicit
Hausdorff-dimension bounds in Theorem~\ref{thm:quantitative} are openly
available on Zenodo~\cite{quartic_salem_hausdorff_verification}.
The archive contains the finite certificate inputs, verification code,
recorded outputs, and instructions for reproducing all finite checks.
The verification programs use exact rational interval arithmetic and
the Python standard library, as described in Appendix~\ref{app:arithmetic}.
The proof of Theorem~\ref{thm:main} is independent of this archive.

\sectionnotoc{Funding}

G.~C. was partially supported by the National Natural Science Foundation of China (NSFC), grant no.~12371126.
X.~F. was partially supported by the National Science and Technology Council, Taiwan, grant no.~114-2115-M-A49-003-MY3.

\sectionnotoc{AI Declaration}

The authors used OpenAI ChatGPT (GPT-6.0 Sol) for language editing, \LaTeX\ formatting, and limited assistance with local mathematical reasoning and numerical verification. 
The overall mathematical strategy and framework were developed by the authors, building on the work of~\cite{CFMZ}.
All mathematical content was written and checked by the authors who take full responsibility.

\sectionnotoc{Conflict of Interest}

The authors declare no conflicts of interest.

\end{document}